\newtheorem{theorem}{Theorem}[section] 
\newtheorem{lemma}[theorem]{Lemma}     
\newtheorem{corollary}[theorem]{Corollary}
\newtheorem{proposition}[theorem]{Proposition}
 \theoremstyle{remark}
   \newtheorem{remark}[theorem]{Remark}
   \newtheorem{example}[theorem]{Example}
\renewcommand{\b}{\beta}
\def\la{\langle}
\def\ra{\rangle}
\renewcommand{\phi}{\varphi}
\renewcommand{\emph}{\textsl}
\begin{document}

\title[The Division Algorithm in Sextic Moment Problems]{The Division Algorithm in Sextic \\ Truncated Moment Problems}
\author{Ra\' ul E. Curto}
\address{Department of Mathematics, The University of Iowa, Iowa City, Iowa
52242}
\email{raul-curto@uiowa.edu}
\urladdr{http://www.math.uiowa.edu/\symbol{126}rcurto/}

\author{Seonguk Yoo}
\address{Department of Mathematics, Sungkyunkwan University, Suwon, Korea}
\email{seyoo73@gmail.com}

\begin{abstract}
For a degree $2n$ finite sequence of real numbers $\beta \equiv \beta^{(2n)}= \{ \beta_{00},\beta_{10},$  $\beta_{01},\cdots, \beta_{2n,0}, \beta_{2n-1,1},\cdots, \beta_{1,2n-1},\beta_{0,2n} \}$ to have a representing measure $\mu $, it is necessary for the associated moment matrix $\mathcal{M}(n)$ to be positive semidefinite, and for the
algebraic variety associated to $\beta $, $\mathcal{V}_{\beta} \equiv \mathcal{V}(\mathcal{M}(n))$, to satisfy $\operatorname{rank} \mathcal{M}(n)\leq \operatorname{card} \mathcal{V}_{\beta}$ as well as the following \textit{consistency} condition: if a polynomial $p(x,y)\equiv \sum_{ij}a_{ij}x^{i}y^j$ of degree at most $2n$ vanishes on $\mathcal{V}_{\beta}$, then the \textit{Riesz functional} $\Lambda (p) \equiv p(\beta ):=\sum_{ij}a_{ij}\beta _{ij}=0$.

Positive semidefiniteness, recursiveness,  and the variety condition of a moment matrix are necessary and sufficient conditions to solve the quadratic ($n=1$) and quartic ($n=2$) moment problems. \ Also, positive semidefiniteness, combined with consistency, is a sufficient condition in the case of \textit{extremal} moment problems, i.e., when the rank of the moment matrix (denoted by $r$) and the cardinality of the associated algebraic variety (denoted by $v$) are equal. \ 

For extremal sextic moment problems, verifying consistency amounts to having good representation theorems for sextic polynomials in two variables vanishing on the algebraic variety of the moment sequence. \ We obtain such representation theorems using the Division Algorithm from algebraic geometry. \ As a consequence, we are able to complete the analysis of extremal sextic moment problems. \bigskip
\end{abstract}

\thanks{The first named author was partially supported by NSF Grants
DMS-0801168 and DMS-1302666. \ The second named author was supported by the Brain Korea 21 Program of the National Research Foundation of Korea (S-2016-0032-000).}

\bigskip
\keywords{extremal moment problems, algebraic variety, Division Algorithm, Riesz functional, consistency}

\subjclass{Primary 47A57, 44A60; \ Secondary 15A45, 15-04, 47A20, 32A60}

\maketitle

\tableofcontents

\setcounter{tocdepth}{2}


\section{Introduction}

Given a doubly indexed finite sequence of real numbers $\beta \equiv\beta^{(2n)}= \{\beta_{00},\beta_{10},\beta_{01},\cdots,\beta_{2n,0}$, $\beta_{2n-1,1},\cdots, \beta_{1,2n-1},\beta_{0,2n} \}$,  the truncated real moment problem entails finding a positive Borel measure $\mu$ supported in the real plane $\mathbb{R}^2$ such that
\begin{eqnarray*}
\beta_{ij}=\int x^i y^j \,\, d\mu \,\,\,(i,j\in \mathbb Z_+, \ 0\leq i+j\leq 2n).
\end{eqnarray*}
The sequence $\beta$ is called a \textit{truncated real moment sequence} (of order $2n$) and $\mu$ is referred to as a \textit{representing measure} for $\beta$. \ 
We next associate a moment sequence $\beta$ to the \textit{moment matrix} $\mathcal M(n)(\beta)$ defined by
$$
\mathcal M(n)(\beta):=(\beta_{\mathbf{ i} +\mathbf{j} })_{\mathbf{ i},\, \mathbf{j}\in \mathbb Z^2_+: |\mathbf{i}|, |\mathbf j| \leq n};
$$
we then introduce the following lexicographical order on the rows and columns of $\mathcal{M}(n): \textit{1},X,Y,X^2,X Y,Y^2, \cdots,X^n,\cdots,Y^n$.\ The column space of $\mathcal M(n)$ will be denoted by $\mathcal{C}_{\mathcal M(n)}$.

Let $\mathcal P\equiv \mathbb{R}[x,y]$ be the space of bivariate polynomials and for $k\geq 1$, let $\mathcal P_k$ be the subspace of $\mathcal P$ whose polynomials are of degree less than or equal to $k$.\ The \textit{Riesz functional} $\Lambda\equiv \Lambda_\b:\mathcal P_{2n}\rightarrow \mathbb{R}$ maps a  polynomial $p(x,y)\equiv \sum_{i,j\geq 0;\,0\leq i+j\leq  2n } a_{ij} x^i y^j$ in $\mathcal P_{2n}$ to $\Lambda(p):= \sum_{i,j\geq 0;\,0\leq i+j\leq  2n } a_{ij} \b_{ij}$; it is obvious that the presence of a representing measure $\mu$ implies the identity $\Lambda(p)=\int p\,\, d\mu$. \ We also let $p(X,Y):= \sum_{i,j\geq 0;\,0\leq i+j\leq  2n } a_{ij} X^i Y^j$; $p(X,Y)$ is thus an element in $\mathcal{C}_{\mathcal M(n)}$.

In what follows, $\hat p$ stands for the coefficient vector $(a_{ij})$ of $p$; the fact that $\mathcal M(n)$ is a symmetric matrix can be characterized by the equation
\begin{equation} \label{recureq}
\la \mathcal M(n) \hat p, \hat q\ra = \Lambda (pq) \qquad (p,q\in \mathcal P_n).
\end{equation}

We are now ready to list some necessary conditions for the existence of a representing measure $\mu$ for $\b$. \ We start with positive semidefiniteness of $\mathcal M(n)$; since $\la \mathcal M(n) \hat p, \hat p\ra = \Lambda (p^2)=\int p^2\,\, d\mu$, and $\mathcal M(n)$ is symmetric, it follows that $\mathcal M(n)$ is positive semidefinite (in symbols, $\mathcal M(n)\geq 0$).\

The next necessary condition is related to the support of the measure. \ To date, there is no general algorithm to find the support of a representing measure for the {\it nonsingular} truncated moment problems, while there are concrete results in the singular case. \ Naturally, the atoms of a representing measure lie in the support of the measure; moreover, as it easily follows from Proposition \ref{prop-3-1} below, the support is contained in the \textit{algebraic variety} of $\mathcal M(n)$, which is the intersection of the zero sets of the polynomials arising from column dependence relations in $\mathcal{M}(n)$; that is,
$$
\mathcal V \equiv \mathcal{V}(M(n)) \equiv \mathcal V_\b := \bigcap_{p\in \mathcal P_n,\,\, \hat p \in \ker \mathcal M(n)}\mathcal Z(p),
$$
where $\mathcal Z(p):=\left\{ (x,y)\in \mathbb R^2 : p(x,y)=0\right\}$.

\begin{proposition}\cite[Proposition 3.1]{tcmp1}\label{prop-3-1}
Suppose $\mu$ is a representing measure for $\beta$. \ Then for $p\in \mathcal P_n$,
\begin{equation}
{\operatorname{supp}\;\mu} \subseteq \mathcal Z(p) \Longleftrightarrow p(X,Y)=\mathbf{0}.
\end{equation}
\end{proposition}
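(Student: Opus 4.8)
The plan is to deduce both implications from the basic identity \eqref{recureq}, combined with the fact that the presence of a representing measure means $\Lambda(\cdot)=\int(\cdot)\,d\mu$. The first thing I would record is that, since $\mathcal M(n)$ is symmetric, the column $p(X,Y)$ is precisely the vector $\mathcal M(n)\hat p$; hence $p(X,Y)=\mathbf 0$ is equivalent to $\mathcal M(n)\hat p=\mathbf 0$, and, because the coefficient vectors $\{\hat q : q\in\mathcal P_n\}$ range over the whole ambient coordinate space, this is in turn equivalent to $\la\mathcal M(n)\hat p,\hat q\ra=0$ for all $q\in\mathcal P_n$, i.e., by \eqref{recureq}, to $\Lambda(pq)=0$ for all $q\in\mathcal P_n$.

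For $(\Leftarrow)$, I would assume $p(X,Y)=\mathbf 0$ and specialize $q=p$, obtaining $0=\la\mathcal M(n)\hat p,\hat p\ra=\Lambda(p^2)=\int p^2\,d\mu$. Since $p^2\geq 0$ and $\mu\geq 0$, the integrand vanishes $\mu$-almost everywhere; to upgrade this to $\operatorname{supp}\mu\subseteq\mathcal Z(p)$ I would argue by contraposition: if $(x_0,y_0)\notin\mathcal Z(p)$ then $p(x_0,y_0)\neq 0$, so by continuity of $p$ the function $p^2$ is strictly positive on some open neighbourhood $U$ of $(x_0,y_0)$, whence $\int_U p^2\,d\mu=0$ forces $\mu(U)=0$ and therefore $(x_0,y_0)\notin\operatorname{supp}\mu$.

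For $(\Rightarrow)$, I would assume $\operatorname{supp}\mu\subseteq\mathcal Z(p)$, i.e., $p=0$ $\mu$-almost everywhere. Then $pq=0$ $\mu$-a.e. for every $q\in\mathcal P_n$, so $\la\mathcal M(n)\hat p,\hat q\ra=\Lambda(pq)=\int pq\,d\mu=0$; by the spanning remark of the first paragraph this yields $\mathcal M(n)\hat p=\mathbf 0$, that is, $p(X,Y)=\mathbf 0$.

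I do not anticipate any real obstacle: both directions are direct applications of \eqref{recureq} and of the definition of a representing measure. The only step requiring a little care is the passage, in $(\Leftarrow)$, from ``$p$ vanishes $\mu$-a.e.'' to the containment of the \emph{closed} set $\operatorname{supp}\mu$ in $\mathcal Z(p)$, which relies on the continuity of $p$ and on the standard description of $\operatorname{supp}\mu$ as the set of points each of whose open neighbourhoods carries positive $\mu$-mass (equivalently, the complement of the largest $\mu$-null open set).
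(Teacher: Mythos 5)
Your proof is correct. The paper itself offers no argument for this proposition (it is quoted from \cite{tcmp1}), and your two-directional argument --- identifying $p(X,Y)$ with $\mathcal{M}(n)\hat p$, using $\Lambda(pq)=\int pq\,d\mu$ via (\ref{recureq}), and handling the support containment through continuity of $p$ and the standard characterization of $\operatorname{supp}\mu$ --- is precisely the standard proof given in that reference.
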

\noindent Thus $\operatorname{supp}\;\mu \subseteq \mathcal V$ and, together with  \cite[(1.7)]{tcmp3}, we establish the \textit{variety condition}: if $r:=\operatorname{rank}\;\mathcal M(n)$ and $v:=\operatorname{card}\;\mathcal V$, then
$
r \leq \operatorname{card\;supp}\;\mu \leq v. \
$
Moreover, in  \cite[Corollary 3.4]{tcmp1}, it is shown that the presence of a measure for $\b$ implies 
\begin{equation}\label{RG}
f,g,fg\in \mathcal P_n\text,~ f(X,Y) =\mathbf{0} \Longrightarrow (fg)(X,Y)=\mathbf{0} .
\end{equation}
When (\ref{RG}) holds, $\b$ (or $\mathcal M(n)(\b)$) is said to be \textit{recursively generated}. \ 

Since recursiveness is in general not sufficient for the existence of a representing measure, one considers stronger conditions.\
The first one is called {weak consistency}; $\b$ is said to be \textit{weakly consistent} if the following property holds:
\begin{equation}\label{wcs}
   p\in\mathcal P_n,~p|_{ {\mathcal V}}\equiv 0 \Longrightarrow p(X,Y)=\mathbf{0},
\end{equation}
Indeed, if $\b$ has a representing measure $\mu$, then the moment sequence must be weakly consistent. \ For, if $p\in \mathcal P_n$ and  $p|_{{\mathcal V}}\equiv 0$, then $p|_{\operatorname{supp}\;\mu}\equiv0$ because $\operatorname{supp}\;\mu\subseteq {\mathcal V}$.\
From Proposition \ref{prop-3-1}, we conclude that $p(X,Y)=\mathbf{0}$. \
Similarly, we can show that weakly consistency implies the recursiveness of $\mathcal M(n)(\b)$.\  

Next, we define an even stronger notion that is the key for the main result of this note;  $\b$ is said to be \textit{consistent} if  the following holds:
\begin{equation}\label{cs}
   p\in\mathcal P_{2n},~p|_{ {\mathcal V}}\equiv0 \Longrightarrow \Lambda(p)=0.
\end{equation}
\noindent
It is easy to see that consistency is a necessary condition for the existence of a measure; moreover, \cite[Lemma 2.3]{tcmp11} shows that consistency is fairly strong. \ Consistency alone helps establish the existence of an interpolating linear functional, as a linear combination of point masses (with some densities possibly negative). \ Thus, naturally,  the next question arose in \cite{tcmp11}:

\begin{quote}
{Suppose $\mathcal M(n)(\b)$ is positive and singular. \ If $\b$ satisfies the variety condition and is consistent, does $\b$ admit a representing measure?}
\end{quote}

\medskip
The answer turns out to be negative \cite[Example 3.2]{Fia10}: there exists a consistent real moment matrix $\mathcal M(3)$, with only one column relation ($Y=X^3$), which admits no representing measure. \ As a result, all necessary conditions introduced so far are not sufficient for general truncated moment problems. \ In \cite[Example 3.2]{Fia10} the algebraic variety is infinite; this type of example illustrates that a moment problem with infinite algebraic variety is, generally speaking, more difficult to solve, an aspect one should keep in mind. \ 

In \cite{tcmp11}, R. Curto, L. Fialkow and M. M\"{o}ller initiated the study of extremal moment problems; that is, the case when $r:=\operatorname{rank}\mathcal{M}(n)(\beta )$ and $v:= \operatorname{card} \mathcal{V}_{\beta }$
satisfy $r=v$.\ 
The main result in \cite{tcmp11} follows,
\begin{theorem}\label{thm13} 
For $\beta \equiv \beta ^{(2n)}$ extremal, i.e.,
$r=v$, the following are equivalent:
\begin{enumerate}[(i)]
\item $\beta $ has a representing measure;
\item $\beta $ has a unique representing measure, which is $\operatorname{rank} \mathcal{M}(n)$-atomic;
\item ${\mathcal{M}}(n)$ is positive semidefinite  and $\beta $ is
consistent.
\end{enumerate}
\end{theorem}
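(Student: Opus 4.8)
The plan is to prove the nontrivial implication $(iii)\Rightarrow(ii)$; the rest is essentially known. Indeed $(ii)\Rightarrow(i)$ is immediate, and for $(i)\Rightarrow(iii)$ a representing measure $\mu$ gives $\langle\mathcal{M}(n)\hat p,\hat p\rangle=\int p^{2}\,d\mu\ge0$, hence $\mathcal{M}(n)\ge0$, while any $p\in\mathcal{P}_{2n}$ with $p|_{\mathcal{V}_{\beta}}\equiv0$ satisfies $p|_{\operatorname{supp}\mu}\equiv0$ (because $\operatorname{supp}\mu\subseteq\mathcal{V}_{\beta}$), so $\Lambda(p)=\int p\,d\mu=0$, i.e., $\beta$ is consistent. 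For $(iii)\Rightarrow(ii)$ I would argue, following \cite{tcmp11}, in four steps. First, $\mathcal{M}(n)$ has finite size, so $r:=\operatorname{rank}\mathcal{M}(n)<\infty$, and extremality forces $v=\operatorname{card}\mathcal{V}_{\beta}=r<\infty$; write $\mathcal{V}_{\beta}=\{w_{1},\dots,w_{r}\}$. Second, I would examine the evaluation map $E\colon\mathcal{P}_{n}\to\mathbb{R}^{r}$, $p\mapsto(p(w_{1}),\dots,p(w_{r}))$; its kernel is $\{p\in\mathcal{P}_{n}:p|_{\mathcal{V}_{\beta}}\equiv0\}$, the inclusion $\hat p\in\ker\mathcal{M}(n)\Rightarrow p|_{\mathcal{V}_{\beta}}\equiv0$ being built into the definition of $\mathcal{V}_{\beta}$ and the reverse inclusion being exactly weak consistency (which follows from consistency). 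Thus $\ker E=\{p\in\mathcal{P}_{n}:\hat p\in\ker\mathcal{M}(n)\}$, a subspace of dimension $\dim\mathcal{P}_{n}-r$, so by rank--nullity $\operatorname{rank}E=r$: the map $E$ is onto, and I may choose $q_{1},\dots,q_{r}\in\mathcal{P}_{n}$ with $q_{i}(w_{j})=\delta_{ij}$.

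Third, I would construct the densities. By (\ref{recureq}) with $q\equiv1$, $\hat p\in\ker\mathcal{M}(n)$ implies $\Lambda(p)=0$, so $\Lambda|_{\mathcal{P}_{n}}$ annihilates $\ker E$; setting $\rho_{i}:=\Lambda(q_{i})$, each $p\in\mathcal{P}_{n}$ has $p-\sum_{i}p(w_{i})q_{i}\in\ker E$, whence $\Lambda(p)=\sum_{i}\rho_{i}p(w_{i})$. Now comes the use of the full (degree $2n$) consistency: for $p\in\mathcal{P}_{2n}$ the polynomial $p-\sum_{i}p(w_{i})q_{i}$ still has degree $\le2n$ and vanishes on $\mathcal{V}_{\beta}$, so $\Lambda\bigl(p-\sum_{i}p(w_{i})q_{i}\bigr)=0$ and therefore
\[
\Lambda(p)=\sum_{i=1}^{r}\rho_{i}\,p(w_{i})\qquad(p\in\mathcal{P}_{2n}).
\]
Taking $p=q_{i}^{2}$ yields $\rho_{i}=\Lambda(q_{i}^{2})=\langle\mathcal{M}(n)\widehat{q_{i}},\widehat{q_{i}}\rangle\ge0$, since $\mathcal{M}(n)\ge0$. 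Fourth, I would set $\mu:=\sum_{i=1}^{r}\rho_{i}\delta_{w_{i}}$, a positive measure, and read off from the displayed identity (applied to the monomials $x^{a}y^{b}$ with $a+b\le2n$) that $\mu$ represents $\beta$; the variety condition then gives $r\le\operatorname{card}\,\operatorname{supp}\,\mu\le v=r$, so every $\rho_{i}>0$ and $\mu$ is $r$-atomic with $r=\operatorname{rank}\mathcal{M}(n)$. For uniqueness, any representing measure $\nu$ has $\operatorname{supp}\nu\subseteq\mathcal{V}_{\beta}$ by Proposition \ref{prop-3-1}, hence $\nu=\sum_{i}\sigma_{i}\delta_{w_{i}}$ with all $\sigma_{i}>0$, and $\sigma_{i}=\int q_{i}\,d\nu=\Lambda(q_{i})=\rho_{i}$, forcing $\nu=\mu$.

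The hard part will be the second step, specifically the surjectivity of $E$: turning the hypothesis that a degree-$\le n$ polynomial vanishing on $\mathcal{V}_{\beta}$ already produces the relation $p(X,Y)=\mathbf{0}$ into the existence of genuine Lagrange interpolants of degree $\le n$ at the points $w_{i}$ is exactly where the extremality $v=r$ enters, and without it the interpolation cannot even be started. The second, more conceptual, point is the third step: weak consistency by itself only pins $\Lambda$ down on $\mathcal{P}_{n}$, and it is the degree-$2n$ form of consistency that lets one propagate the interpolation identity to all of $\mathcal{P}_{2n}$ — precisely what is needed to pass from a formal point-mass expansion of $\Lambda$ on $\mathcal{P}_{n}$ to a bona fide representing measure for $\beta^{(2n)}$.
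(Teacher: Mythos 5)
Your argument is correct; note that the paper does not prove this theorem itself but imports it from \cite{tcmp11}, and your proof is essentially the one given there: consistency plus extremality yields the Lagrange interpolants $q_i\in\mathcal{P}_n$ and the point-mass representation $\Lambda(p)=\sum_i\rho_i p(w_i)$ on $\mathcal{P}_{2n}$, after which positivity forces $\rho_i=\Lambda(q_i^2)\ge 0$ and the variety condition upgrades this to $\rho_i>0$. The only cosmetic remark is that in the uniqueness step you do not need (and cannot assume a priori) that all $\sigma_i>0$; $\sigma_i\ge 0$ together with $\sigma_i=\int q_i\,d\nu=\Lambda(q_i)=\rho_i$ already suffices.
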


For the special case of the column relation $Y=X^{3}$, a detailed study can be found in \cite {tcmp11}; moreover, a complete solution appears in \cite{Fia10}.\
To briefly summarize the main results of this case, observe first that such a bivariate sextic truncated moment problem allows only two extremal cases: $r=v=7$ and $r=v=8$ (see Table 1 in Section \ref{esmp}).\ For the former case, it was proved in \cite[Section 4]{tcmp11} that positivity and recursiveness are sufficient for the existence of representing measures. \ On the other hand, when $r=v=8$, consistency is required.\

Since all the aforementioned conditions do not solve higher order truncated moment problems, a new notion must be discovered; the first one came in \cite{Fia10}, in which a solution of  $\mathcal M(n)(\beta)$, for any $n\geq 1$, satisfying $Y=X^3$ was presented with a numerical condition $\b_{1,2n-1}>\psi(\b)$ (where $\psi(\b)$ is a quantity dependent on the moments), together with positivity and the variety condition.\ This type of numerical condition was never needed in solving quadratic and quartic moment problems, but it was somewhat naturally expected as part of the solution of higher order moment problems.\

Our main results in subsequent sections will confirm that numerical conditions play an important role in solving truncated moment problems. \ 
Our solutions require new cubics or quartics whose zero sets contain the original algebraic variety, together with cubics arising from the column dependence of the moment matrix. \ For these new polynomials, we need to verify consistency; it is this verification that will bring a new type of numerical condition.\  

We briefly pause to remind the reader that there exists an equivalence between bivariate real moments problems and one-variate complex moment problems \cite[Proposition 1.12]{tcmp6}. \ Thus, in what follows we freely switch between real moment matrices $\mathcal{M}(n)$ with columns $\textit{1}, X, Y, X^2, \cdots$ and complex moment matrices $M(n)$ with columns $\textit{1}, Z, \bar{Z}, Z^2, \cdots$.

In this note, our focus is on the bivariate moment problem, with associated ${\mathcal{M}}(3)$ with submatrix ${\mathcal{M}}(2)$ positive and invertible, and with a finite algebraic variety. \ We examine extremal truncated real moment problems with cubic column relations. \ In \cite{CuYoo1}, we discussed a general case of complex sextic moment problems with a cubic  harmonic  column relation, $Z^3=it Z + u\bar Z$ ($t,u\in \mathbb R$). \ In that paper, we showed that the polynomial has zeros symmetrically located, for certain values of $t$ and $u$. \ Indeed, this problem  turns out to \emph{be extremal} and a solution was discovered by checking consistency of the moment matrix after establishing a representation theorem of polynomials vanishing on the algebraic variety $\mathcal V$, namely $\mathcal I_{\mathcal V}$.\

In Sections \ref{sec-r-7} and \ref{sec-r-8}, sextic moment problems with more general cubic column relations in real form will be considered; we will focus on the extremal cases.\ 

For the case of harmonic polynomials of the form $Z^3=it Z + u\bar Z \; (u,t \in \mathbb{R})$, a representation theorem of polynomials vanishing on the algebraic variety (which consists of $7$ points) is achieved by dimensional analysis between a quotient space of complex polynomials and a higher dimensional complex space \cite{CuYoo1}; for more general cases, the Division Algorithm from algebraic geometry will be used instead.\ Conceptually, the set $\mathcal I_{\mathcal V}$ behaves like an ideal and is generated by polynomials from the column relations, plus a few other polynomials obtained from solving Vandermonde equations.\

In this paper we focus on the use of the Division Algorithm as a means of identifying a small collection of polynomials for which the Consistency Property must be verified. \ This explicit listing of just a few polynomials is useful in concrete situations, since it reduces the calculations needed to prove the existence of a representing measure. \ On the other hand, we should also keep in mind the general strategy in \cite{Fia08}, esp. Proposition 3.6 and Remark 3.7, in which L. Fialkow presents a general test for consistency in the case of a finite algebraic variety, and when the points of the variety are known. \ Thus, we wish to make the reader aware that our approach is not necessarily unique, from the point of view of determining solubility from the concrete knowledge of the algebraic variety. \ We emphasize the Division Algorithm, but a different approach could be developed using large Vandermonde matrices and a single matrix-vector equation $A \mathbf{x} = \mathbf{b}$, following the conceptual ideas in \cite{Fia08}.


\section{The Division Algorithm for Multivariable Polynomials}

In this paper, we settle all remaining cases of the extremal sextic moment problem, via an application of the Division Algorithm in real algebraic geometry.\ The algorithm, which follows, involves polynomials in several indeterminates.

\begin{theorem} \label{divalg} (Division Algorithm \cite{CLO})
Fix a monomial order $>$  on $\mathbb Z^n_{+}$,
and let $F=(f_1, \cdots, f_s)$ be an ordered $s$-tuple of polynomials in  $\mathbb R[x_1,\cdots,x_n]$. \ Then every $f \in\mathbb R[x_1,\cdots,x_n]$ can be written as
$$
f=a_1 f_1 + \cdots + a_s f_s + r,
$$
where $a_i, r  \in \mathbb R[x_1,\cdots,x_n]$, and either $r=0$ or $r$ is a linear combination of monomials with coefficients in $\mathbb R$,
none of which is divisible by any leading terms of $f_1, \cdots, f_s$. \ We call {$r$} a \emph{remainder} of $f$ on division by $F$.\
Furthermore, if $a_i f_i\neq 0,$ then we have
$$
\operatorname{multideg}(f)\geq\operatorname{multideg}(a_i f_i).
$$
\end{theorem}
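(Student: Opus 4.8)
The plan is to prove the theorem constructively, by exhibiting the algorithm that produces the quotients $a_i$ and the remainder $r$, and then verifying that it terminates and that its output has the stated properties. Initialize $a_1 = \cdots = a_s = 0$, $r = 0$, and introduce an auxiliary \emph{working polynomial} $p$, initially $p = f$. Throughout, the algorithm preserves the invariant
$$
f = a_1 f_1 + \cdots + a_s f_s + p + r .
$$
While $p \neq 0$, one performs a single step: inspect $\operatorname{LT}(p)$ with respect to the fixed monomial order $>$. If some $\operatorname{LT}(f_i)$ divides $\operatorname{LT}(p)$, choose the least such index $i$ and carry out a \emph{division step}, replacing $a_i$ by $a_i + \operatorname{LT}(p)/\operatorname{LT}(f_i)$ and $p$ by $p - \big(\operatorname{LT}(p)/\operatorname{LT}(f_i)\big) f_i$. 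Otherwise, carry out a \emph{remainder step}, replacing $r$ by $r + \operatorname{LT}(p)$ and $p$ by $p - \operatorname{LT}(p)$. In either case the invariant is clearly preserved, so once the loop exits (i.e. $p = 0$) we obtain $f = a_1 f_1 + \cdots + a_s f_s + r$.

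The next step is termination. In a remainder step we subtract exactly $\operatorname{LT}(p)$, so $\operatorname{multideg}(p)$ strictly decreases. In a division step the two top terms cancel, since $\big(\operatorname{LT}(p)/\operatorname{LT}(f_i)\big)\operatorname{LT}(f_i) = \operatorname{LT}(p)$, and again $\operatorname{multideg}(p)$ strictly decreases (or $p$ becomes $0$). Hence the successive values of $\operatorname{multideg}(p)$ form a strictly decreasing sequence in the monomial order $>$; since every monomial order on $\mathbb Z^n_{+}$ is a well-ordering, such a sequence must be finite, so the algorithm halts. This appeal to the well-ordering property is the one genuinely nontrivial ingredient, and it is exactly where the hypothesis that $>$ is a \emph{monomial} order (rather than an arbitrary total order compatible with multiplication) is used; I expect this to be the main conceptual point, even though its verification is short.

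It then remains to read off the two structural claims from the bookkeeping. Every monomial ever appended to $r$ was the leading term $\operatorname{LT}(p)$ of the working polynomial at a moment when no $\operatorname{LT}(f_j)$ divided it; since $r$ is a sum of such monomials (and $r = 0$ precisely when the remainder step is never executed), $r$ has the asserted form. For the multidegree bound, observe that every term accumulated into $a_i$ has the form $\operatorname{LT}(p')/\operatorname{LT}(f_i)$ for some intermediate value $p'$ of the working polynomial, and that $\operatorname{multideg}(p') \leq \operatorname{multideg}(f)$ throughout, because the multidegrees of the successive $p'$ only decrease from the initial value $\operatorname{multideg}(f)$. Thus each contributing term of the product $a_i f_i$ has multidegree $\operatorname{multideg}\big((\operatorname{LT}(p')/\operatorname{LT}(f_i)) f_i\big) = \operatorname{multideg}(p') \leq \operatorname{multideg}(f)$, and since the multidegree of a nonzero polynomial is at most the largest multidegree among the terms composing it, we conclude $\operatorname{multideg}(a_i f_i) \leq \operatorname{multideg}(f)$ whenever $a_i f_i \neq 0$. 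The only points requiring care are that the cancellation in a division step is exact, that multidegrees never increase along the run, and that the monomial order interacts with products and sums in the standard way; none of these presents a real obstacle.
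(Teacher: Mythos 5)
Your proof is correct: it is the standard constructive argument (the explicit division loop, the invariant $f=a_1f_1+\cdots+a_sf_s+p+r$, termination via the well-ordering property of monomial orders, and the multidegree bound read off from the fact that each term fed into $a_i$ comes from a working polynomial of multidegree at most $\operatorname{multideg}(f)$). The paper itself offers no proof of this statement — it is quoted verbatim as a known result from the cited reference of Cox, Little and O'Shea — and your argument is essentially the proof given there, with the genuinely load-bearing points (exact cancellation of leading terms in a division step, and termination via well-ordering) correctly identified.
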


We shall use the Division Algorithm to prove a structure theorem for polynomials vanishing on the algebraic variety of $\mathcal M(3)$.\ Although not necessary to solve the truncated moment problem, we note that one significant difference between the one-dimensional and multi-dimensional versions of the Division Algorithm lies in the fact that a remainder might not be unique in the latter case.\ Thus, the Division Algorithm  might be regarded as an imperfect generalization of the traditional (one-variable) version. \ However, this situation can be fixed through the so-called \emph{Gr\" {o}ebner basis}. \ For further study on this topic, the reader is referred to \cite[Section 2.3]{CLO}.

\bigskip

\section{Extremal Sextic Moment Problems}\label{esmp}

To date, the most concrete solution of  the truncated moment problem consists of building a sequence of moment matrix extensions which eventually renders a flat moment matrix. \ (A moment matrix $\mathcal{M}(n)$ is said to be {\it flat} if $\operatorname{rank} \mathcal M(n) = \operatorname{rank} \mathcal M(n-1)$.) \  
In principle, one should be able to estimate the smallest integer $k$ satisfying  $\operatorname{rank} \mathcal{M}(n+k)=\operatorname{rank}\;\mathcal{M}(n+k+1).$ \ 
However, for sextic or higher order moment problems, the process does not work very well because it requires too many parameters; this leads to memory overflow when using a software package like {\it Mathematica} \cite{Wol}.\ The existence criterion in \cite[Theorem 1.5]{tcmp3} gives an upper bound of the number of extension matrices; it is $k= 2n^2 + 6n + 6$, generally too large for a practical construction; however, the following theorem made a significant improvement, in the case where the moment sequence has a finite algebraic variety.\
 
\begin{theorem}\cite[Theorem 2.1]{Fia08}\label{min-k}
 Suppose $v<\infty$. \ Then ${\beta}$ admits a representing measure if and only if
  $\mathcal M (n)({\beta})$ has a positive extension $\mathcal M (n+v-r+1)$ satisfying $\operatorname{rank} \mathcal M (n+v-r+1)\leq\operatorname{card}\;\mathcal V(\mathcal M (n+v-r+1)).$
\end{theorem}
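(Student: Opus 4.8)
The plan is to deduce the theorem from the Curto--Fialkow Flat Extension Theorem (if a positive moment matrix $\mathcal M(k)$ admits a positive extension $\mathcal M(k+1)$ with $\operatorname{rank}\mathcal M(k+1)=\operatorname{rank}\mathcal M(k)$, then $\beta^{(2k)}$ has a $\operatorname{rank}\mathcal M(k)$-atomic representing measure, see \cite{tcmp3}), using as the main tool a pigeonhole argument on the ranks of the successive moment matrices $\mathcal M(n),\mathcal M(n+1),\dots,\mathcal M(n+v-r+1)$.

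For the forward implication, suppose $\mu$ represents $\b$. By Proposition~\ref{prop-3-1}, $\operatorname{supp}\mu\subseteq\mathcal V_\b$, which is finite since $v<\infty$; hence $\mu=\sum_{i=1}^{w}\rho_i\delta_{(x_i,y_i)}$ with $w\le v$ atoms. For any $N\ge n$ the matrix $\mathcal M(N)(\mu)$ is a positive extension of $\mathcal M(n)$ (its upper-left block is $\mathcal M(n)(\mu)=\mathcal M(n)$), it has rank at most $w$ (being a sum of $w$ rank-one matrices), and its variety contains $\operatorname{supp}\mu$, so $\operatorname{card}\mathcal V(\mathcal M(N)(\mu))\ge w\ge\operatorname{rank}\mathcal M(N)(\mu)$. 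Taking $N=n+v-r+1$ produces the required extension, so this direction needs nothing beyond finite atomicity.

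For the converse, suppose $\widetilde{\mathcal M}:=\mathcal M(n+v-r+1)$ is a positive extension of $\mathcal M(n)$ with $\operatorname{rank}\widetilde{\mathcal M}\le\operatorname{card}\mathcal V(\widetilde{\mathcal M})$. First I would check that $\operatorname{card}\mathcal V(\widetilde{\mathcal M})\le v$: every column relation of $\mathcal M(n)$ persists in $\widetilde{\mathcal M}$, since for $\hat p\in\ker\mathcal M(n)$ the zero-padded vector $\hat p\,'$ satisfies $\ip{\widetilde{\mathcal M}\hat p\,',\hat p\,'}=\ip{\mathcal M(n)\hat p,\hat p}=0$, whence $\widetilde{\mathcal M}\hat p\,'=0$ by positivity; therefore $\mathcal V(\widetilde{\mathcal M})\subseteq\mathcal V_\b$. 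Setting $\tilde r:=\operatorname{rank}\widetilde{\mathcal M}$ we get $r\le\tilde r\le v$. Since the intermediate $\mathcal M(n+j)$ are principal compressions of $\widetilde{\mathcal M}$, hence positive, the non-decreasing chain $r=\operatorname{rank}\mathcal M(n)\le\operatorname{rank}\mathcal M(n+1)\le\cdots\le\operatorname{rank}\mathcal M(n+v-r+1)=\tilde r$ consists of $v-r+2$ integers taking values in the $(v-r+1)$-element set $\{r,r+1,\dots,v\}$. By the pigeonhole principle, $\operatorname{rank}\mathcal M(m)=\operatorname{rank}\mathcal M(m+1)$ for some $m$ with $n\le m\le n+v-r$, so $\mathcal M(m+1)$ is a flat extension of $\mathcal M(m)$, and the Flat Extension Theorem yields a representing measure for $\b^{(2m)}$, which restricts to one for $\b$.

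The pigeonhole bookkeeping is routine and the substantive input, the Flat Extension Theorem, is treated as known. The one point demanding care — and what I regard as the crux — is the inequality $\operatorname{card}\mathcal V(\widetilde{\mathcal M})\le v$ in the converse: without it the rank chain could in principle climb past $v$ and the counting would collapse. It is secured by the persistence-of-relations observation, which uses only the positivity of $\widetilde{\mathcal M}$ and not recursiveness; this is also precisely what makes the bound $n+v-r+1$ on the order of the required extension sharp enough to be useful, in contrast with the general bound $2n^2+6n+6$.
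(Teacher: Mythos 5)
Your proof is correct, and it is essentially the argument of the cited source \cite{Fia08}: the paper itself imports this theorem without proof, and the standard proof is exactly your combination of kernel persistence under positive extension (giving $\mathcal V(\mathcal M(n+v-r+1))\subseteq\mathcal V_\beta$, hence the bound by $v$), the pigeonhole count on the non-decreasing rank chain of the $v-r+2$ compressions, and the Flat Extension Theorem of \cite{tcmp3}. The forward direction via finite atomicity of any representing measure is likewise the standard one, so there is nothing to add.
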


\begin{corollary} \label{newcor}
Let $\beta$, $n$, $v$ and $r$ be as in Theorem \ref{min-k}, and assume that $v=r$. \ Then $n+v-r+1=n+1$, so that there exists a representing measure if and only if $\mathcal{M}(n)(\beta)$ has a positive extension $\mathcal{M}(n+1)$ satisfying $\operatorname{rank} \mathcal{M}(n+1) \le \operatorname{card} \; \mathcal{V} (\mathcal{M}(n+1))$.
\end{corollary}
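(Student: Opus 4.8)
The plan is to obtain Corollary \ref{newcor} as an immediate specialization of Theorem \ref{min-k}, so the proof is essentially a one-line deduction together with a bookkeeping remark. First I would note that the hypotheses of Theorem \ref{min-k} are in force: the corollary stipulates that $\beta$, $n$, $v$, $r$ are as there, so in particular $v<\infty$, and we append the extremality assumption $v=r$. Next I would carry out the arithmetic: substituting $v=r$ into the index appearing in Theorem \ref{min-k} gives $n+v-r+1 = n+(v-r)+1 = n+1$. Feeding this back into the statement of Theorem \ref{min-k} replaces $\mathcal{M}(n+v-r+1)$ by $\mathcal{M}(n+1)$ and $\mathcal{V}(\mathcal{M}(n+v-r+1))$ by $\mathcal{V}(\mathcal{M}(n+1))$, which is precisely the asserted equivalence: $\beta$ admits a representing measure if and only if $\mathcal{M}(n)(\beta)$ has a positive extension $\mathcal{M}(n+1)$ satisfying $\operatorname{rank}\mathcal{M}(n+1)\leq\operatorname{card}\mathcal{V}(\mathcal{M}(n+1))$. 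No further argument is needed.

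Since the deduction is purely formal, there is no genuine obstacle inside the proof of the corollary itself; all of the content resides in Theorem \ref{min-k}. What the corollary does buy us is a decisive simplification of the search for a representing measure in the extremal regime $r=v$: instead of constructing a tower of moment matrix extensions of a priori unknown (and typically large) length, it suffices to produce a \emph{single} positive extension $\mathcal{M}(n+1)$ of $\mathcal{M}(n)$ meeting the rank--variety inequality. The real difficulty, deferred to the subsequent sections, is to decide when such an $\mathcal{M}(n+1)$ exists for the sextic problems at hand (the case $n=3$); this is exactly where the Division Algorithm, the representation theorems for polynomials in $\mathcal{I}_{\mathcal V}$, and the verification of the Consistency Property — along with the attendant numerical conditions — come into play.
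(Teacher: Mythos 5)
Your proposal is correct and matches the paper's treatment: the corollary is an immediate specialization of Theorem \ref{min-k}, obtained by substituting $v=r$ so that $n+v-r+1=n+1$, and the paper offers no further argument beyond this. The surrounding commentary about where the real work lies is accurate but not part of the proof itself.
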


When Corollary \ref{newcor} is applied to the extremal case ($r=v$) of the sextic moment problem ($n=3$), it implies that the existence of a representing measure is equivalent to the  existence of a moment matrix extension $\mathcal{M}(4)$ such that $\operatorname{rank} \mathcal{M}(4) \le \operatorname{card} \; \mathcal{V}(\mathcal{M}(4))$. \ 

Theorem \ref{min-k} states that if a moment sequence admits a representing measure, $\mathcal M (n)$ must have an ascending sequence of moment matrix extensions (rank-increasing) with length $k= v-r$. \ 
As a result, we can classify all sextic truncated moment problems, as follows: let us consider an ascending sequence of moment matrix extensions
$$
\mathcal M (n)\rightarrow \mathcal M (n+1)\rightarrow \mathcal M (n+2)\rightarrow \cdots.
$$
Now denote $r_n:=\operatorname{rank}\;\mathcal M (n)$ and $v_n:=\operatorname{card}\;\mathcal  V(\mathcal M (n))$.\ 
The variety condition must hold, so we obtain the following chain of inequalities:
$$
{r_n\leq r_{n+1}\leq r_{n+2} \leq \cdots \leq v_{n+2} \leq v_{n+1}\leq v_n}.
$$
Since truncated moment problems with quadratic column relations are well understood, we may assume, without loss of generality, that our moment matrix $\mathcal M (3)$ has an invertible $\mathcal M (2)$ block; that is, we will always assume that $\mathcal M (2)$ is positive definite.\  Table 1 depicts a classification of sextic moment problems, in terms of the rank of the moment matrix and the cardinality of its algebraic variety. \
For the cases with a finite algebraic variety, it shows the maximum length of the extension sequence (see the column labeled Max Extension).

\begin{table}[hptp]\label{table1}
\centering
\begin{tabular}{|c|c|c|c|c|c|c|}
  \hline
  $r_3$ & $v_3$ & $v_3-r_3$ & Max Extension& & Ex w rep meas & Ex w/o rep meas\\
  \hline
  \hline
  7 &  7 &  0 & $\mathcal M (4)$ & extremal & known & unknown\\
 \hline
   7 & 8 & 1 & $\mathcal M (5)$ & & unknown & unknown\\
 \hline

   7 & 9 & 2 & $\mathcal M (6)$ & & unknown & unknown\\
  \hline

  7 & $\infty$ &  N/A & N/A & & known & known\\
  \hline

  8 &  8 & 0 & $\mathcal M (4)$ & extremal& known & known\\
\hline
   8 & 9 & 1 & $\mathcal M (5)$ &  & known & known\\
 \hline

 8 & $\infty$ &  N/A & N/A & & known & known\\
  \hline

 9 & 9 &0  &N/A  &  & impossible & impossible\\
\hline

9 & $\infty$ &  N/A & N/A & & known & known\\
  \hline

10 & $\infty$ &  N/A & N/A & & known & known\\
  \hline
\end{tabular}
\vskip .2cm
\caption{Classification of sextic moment problems in terms of $r$ and $v$}
\end{table}

As noted in Table 1, the case of $r_3=v_3=9$ cannot happen, since the presence of exactly one column relation means that the associated algebraic curve is the algebraic variety, and this immediately means that $v_3$ is infinity. \ 

In this note,  we are mostly interested in the extremal cases, but we will also see how some non-extremal cases of $\mathcal{M}(n)$ can be treated as extremal moment problems for an extension $\mathcal{M}(n+1)$, after allowing proper new moments. \ For this, we need two more results from \cite{tcmp10}. 

\begin{theorem}\cite[Theorem 2.3]{tcmp10}\label{fe-rv}
If $\mathcal M(n)\geq 0$ admits a flat extension $\mathcal M(n+1)$ (i.e., $\operatorname{rank} \mathcal M(n+1) = \operatorname{rank} \mathcal M(n)$),
then $\operatorname{rank} \mathcal M(n)=\operatorname{card} \mathcal V(\mathcal M(n+1))$ and $\mathcal V(\mathcal M(n+1))$ forms
the support of the unique representing measure $\nu$ for $\mathcal M(n+1)$.
\end{theorem}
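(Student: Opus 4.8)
The plan is to deduce the statement from the Flat Extension Theorem of Curto and Fialkow \cite{tcmp1} and then to identify the support of the resulting measure with the variety by a short argument built around the multiplication operators attached to $\mathcal M(n)$. Throughout write $r:=\operatorname{rank}\mathcal M(n)=\operatorname{rank}\mathcal M(n+1)$ (the last equality being the flatness hypothesis), and observe that a flat extension of a positive semidefinite moment matrix is again positive semidefinite, so $\mathcal M(n+1)\geq 0$. \textbf{Step 1} (\emph{existence and shape of $\nu$}): since $\mathcal M(n+1)\geq 0$ is flat, it is recursively generated, and the Flat Extension Theorem applies; hence $\mathcal M(n+1)$ admits successive flat (positive, recursively generated) extensions $\mathcal M(N)$ for every $N\geq n+1$, and in particular has a unique representing measure $\nu$, which is $r$-atomic. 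Write $\operatorname{supp}\nu=\{x_1,\dots,x_r\}$, a set of $r$ distinct points of $\mathbb R^2$, and $\nu=\sum_{i=1}^r\rho_i\delta_{x_i}$ with $\rho_i>0$.

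\textbf{Step 2} (\emph{$\operatorname{supp}\nu\subseteq\mathcal V(\mathcal M(n+1))$ and the multiplication operators}): the inclusion is immediate from Proposition~\ref{prop-3-1} applied at level $n+1$, so already $\operatorname{card}\mathcal V(\mathcal M(n+1))\geq r$. For the reverse bound I would use flatness to define, on the $r$-dimensional space $\mathcal C_{\mathcal M(n)}$, the multiplication operators $M_x\colon p(X,Y)\mapsto (xp)(X,Y)$ and $M_y\colon p(X,Y)\mapsto (yp)(X,Y)$: if $\deg p\leq n$ then $\deg(xp)\leq n+1$, so $(xp)(X,Y)$ lives in $\mathcal C_{\mathcal M(n+1)}$, which by flatness coincides with $\mathcal C_{\mathcal M(n)}$, and recursive generation makes this assignment independent of the chosen representative $p$. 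The operators $M_x,M_y$ then commute (polynomial multiplication is commutative) and, for the inner product induced on $\mathcal C_{\mathcal M(n)}$ by $\mathcal M(n)$, are self-adjoint, because of the identity $\langle\mathcal M(n)\widehat p,\widehat q\rangle=\Lambda(pq)$.

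\textbf{Step 3} (\emph{the variety has at most $r$ points}): fix $w=(w_1,w_2)\in\mathcal V(\mathcal M(n+1))$. The functional $\ell_w\colon\mathcal C_{\mathcal M(n+1)}\to\mathbb R$, $p(X,Y)\mapsto p(w)$, is well defined (its vanishing on $\ker\mathcal M(n+1)$ is exactly the condition $w\in\mathcal V(\mathcal M(n+1))$) and nonzero (it takes the value $1$ on the column $\textit{1}$). Under the identification $\mathcal C_{\mathcal M(n+1)}=\mathcal C_{\mathcal M(n)}$, the computation $\ell_w\big((xp)(X,Y)\big)=(xp)(w)=w_1\,\ell_w\big(p(X,Y)\big)$ gives $M_x^{*}\ell_w=w_1\ell_w$ and likewise $M_y^{*}\ell_w=w_2\ell_w$; that is, $\ell_w$ is a joint eigenvector of the commuting pair $(M_x^{*},M_y^{*})$ with joint eigenvalue $w$. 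Joint eigenvectors of a commuting pair of operators on an $r$-dimensional space corresponding to distinct joint eigenvalues are linearly independent (separate the finitely many eigenvalues by a polynomial and apply it to the operators), whence $\operatorname{card}\mathcal V(\mathcal M(n+1))\leq r$. Combining with Step 2, $\operatorname{card}\mathcal V(\mathcal M(n+1))=r=\operatorname{rank}\mathcal M(n)$; since $\operatorname{supp}\nu\subseteq\mathcal V(\mathcal M(n+1))$ and both sets have cardinality $r$, they coincide. As $\nu$ is by Step 1 the unique representing measure of $\mathcal M(n+1)$, this is the asserted identification of $\mathcal V(\mathcal M(n+1))$ with the support of the representing measure.

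The main obstacle is the well-definedness of $M_x$ and $M_y$ in Step 2, i.e., that replacing $(xp)(X,Y)\in\mathcal C_{\mathcal M(n+1)}$ by an element of $\mathcal C_{\mathcal M(n)}$ is unambiguous; this is precisely where flatness and recursive generation of $\mathcal M(n+1)$ are used essentially, everything else being linear algebra together with Proposition~\ref{prop-3-1}. A variant that postpones this difficulty instead propagates the column relations of $\mathcal M(n+1)$ up the flat tower to show that each $w\in\mathcal V(\mathcal M(n+1))$ satisfies $V_N(w)\in\operatorname{span}\{V_N(x_1),\dots,V_N(x_r)\}$ for all $N$ (where $V_N(x)$ is the vector of monomials of degree $\leq N$ evaluated at $x$), so that the multiplicative functional $p\mapsto p(w)$ on $\mathbb R[x,y]$ equals $\sum_i c_i\big(p\mapsto p(x_i)\big)$ with $\sum_i c_i=1$; separating $w$ from the $x_i$ by a polynomial then forces all $c_i=0$, a contradiction unless $w\in\{x_1,\dots,x_r\}$. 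This variant merely hides the same difficulty inside the equivalent claim that passing to flat extensions does not shrink the variety.
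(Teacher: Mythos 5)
This theorem is quoted verbatim from \cite{tcmp10} and the paper supplies no proof of its own, so there is nothing internal to compare against; your argument is correct and is essentially the canonical Curto--Fialkow proof: existence and uniqueness of the $\operatorname{rank}\mathcal M(n)$-atomic measure from the Flat Extension Theorem, the inclusion $\operatorname{supp}\nu\subseteq\mathcal V(\mathcal M(n+1))$ from Proposition~\ref{prop-3-1}, and the reverse bound $\operatorname{card}\mathcal V(\mathcal M(n+1))\leq\operatorname{rank}\mathcal M(n)$ by realizing each point of the variety as a joint eigenvalue of the commuting self-adjoint multiplication operators on the $r$-dimensional column space. You also correctly isolate the one genuinely nontrivial step --- that positivity plus flatness forces $\mathcal M(n+1)$ to be recursively generated, so that $M_x$ and $M_y$ are well defined on $\mathcal C_{\mathcal M(n)}\cong\mathcal C_{\mathcal M(n+1)}$ --- which is precisely the supporting lemma the cited reference establishes.
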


\begin{theorem}\cite[Theorem 2.4]{tcmp10} \label{fe-rv2}
Assume that $\mathcal M(n)\geq 0$ admits a flat extension $\mathcal M(n+1)$, and let $\mathcal{M}(n+2)$ the subsequent flat extension of $\mathcal{M}(n+1)$. \ Then $\mathcal V(\mathcal M(n+2))=\mathcal V(\mathcal M(n+1)).$
\end{theorem}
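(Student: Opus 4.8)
The plan is to derive the statement directly from Theorem \ref{fe-rv}, which already identifies the variety of a flat extension with the support of its unique representing measure; the only thing to check is that a single measure does the job at two consecutive levels. First I would apply Theorem \ref{fe-rv} to the flat extension $\mathcal M(n)\to\mathcal M(n+1)$: this produces a unique representing measure $\nu$ for $\mathcal M(n+1)$ with $\operatorname{supp}\nu=\mathcal V(\mathcal M(n+1))$ and $\operatorname{card}\operatorname{supp}\nu=\operatorname{rank}\mathcal M(n)$. Since a moment matrix carrying a representing measure is automatically positive semidefinite, $\mathcal M(n+1)\geq 0$, and because $\mathcal M(n+2)$ is by hypothesis a flat extension of $\mathcal M(n+1)$, Theorem \ref{fe-rv} applies a second time, now to $\mathcal M(n+1)\to\mathcal M(n+2)$: it yields a unique representing measure $\nu'$ for $\mathcal M(n+2)$ with $\operatorname{supp}\nu'=\mathcal V(\mathcal M(n+2))$ and $\operatorname{card}\operatorname{supp}\nu'=\operatorname{rank}\mathcal M(n+1)$.

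It then remains only to identify $\nu$ and $\nu'$. Since $\mathcal M(n+1)$ is a compression (principal submatrix) of $\mathcal M(n+2)$, the measure $\nu'$ is automatically a representing measure for $\mathcal M(n+1)$; by the uniqueness clause of Theorem \ref{fe-rv} at the first level, $\nu'=\nu$, and hence
$$
\mathcal V(\mathcal M(n+2))=\operatorname{supp}\nu'=\operatorname{supp}\nu=\mathcal V(\mathcal M(n+1)),
$$
which is the assertion.

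I do not anticipate a genuine obstacle along this route; the work is entirely in feeding the hypotheses of Theorem \ref{fe-rv} correctly. For completeness I would note a ``bare-hands'' variant: the inclusion $\mathcal V(\mathcal M(n+2))\subseteq\mathcal V(\mathcal M(n+1))$ is immediate, because every column relation of $\mathcal M(n+1)$ of degree $\leq n+1$ persists in the recursively generated extension $\mathcal M(n+2)$ (apply Proposition \ref{prop-3-1} to a representing measure), and one then combines $\operatorname{card}\mathcal V(\mathcal M(n+2))=\operatorname{rank}\mathcal M(n+1)=\operatorname{rank}\mathcal M(n)=\operatorname{card}\mathcal V(\mathcal M(n+1))$ with the fact that a containment of finite sets of equal cardinality is an equality. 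The only genuinely delicate point, in either approach, is the opposite containment $\mathcal V(\mathcal M(n+1))\subseteq\mathcal V(\mathcal M(n+2))$ at the level of individual points — i.e.\ that the \emph{new} degree-$(n+2)$ column relations forced by flatness vanish on all of $\mathcal V(\mathcal M(n+1))$ — and this is precisely where one must invoke the finitely atomic representing measure together with the equality $\operatorname{rank}\mathcal M(n)=\operatorname{card}\mathcal V(\mathcal M(n+1))$, rather than arguing directly with the defining polynomials.
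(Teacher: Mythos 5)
Your argument is correct: the paper itself gives no proof of this statement (it is quoted from \cite{tcmp10}), and your derivation --- applying Theorem \ref{fe-rv} at both levels and using the uniqueness of the representing measure for $\mathcal M(n+1)$ to identify $\nu'$ with $\nu$, hence $\mathcal V(\mathcal M(n+2))=\operatorname{supp}\nu'=\operatorname{supp}\nu=\mathcal V(\mathcal M(n+1))$ --- is essentially the standard argument in that source. Your ``bare-hands'' variant (containment of kernels gives $\mathcal V(\mathcal M(n+2))\subseteq\mathcal V(\mathcal M(n+1))$, then equality of finite cardinalities forces equality) is also sound.
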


\noindent Combining Theorems \ref{fe-rv} and \ref{fe-rv2}, we find a necessary condition for the existence of a flat extension:
\begin{equation}\label{rv-seq}
 r_n=r_{n+1} \Longrightarrow   \left\{
                                   \begin{array}{rcl}
                                     r_n&=&v_{n+1}\\
                                     v_{n+1} &=& v_{n+2}
                                   \end{array}
                                 \right\} \Longrightarrow r_n= v_{n+1} = v_{n+2}.
\end{equation}
It follows that $r_n=r_{n+1} \Longrightarrow r_{n+1}=v_{n+1}$, and therefore $\mathcal{M}(n+1)$ must be extremal. \ 

If we now focus on the case $\mathcal M(3)\geq 0$ with $r_3=8$ and $v_3=9$ and apply (\ref{rv-seq}), we observe that there exist two feasible cases:
\begin{table}[hptp]
\centering
\begin{tabular}{|ccccccccccc|}
\hline
  $r_3$ & $\leq$ & $r_4$ & $\leq$ & $r_5 $&$ \leq $&$ v_5 $&$ \leq $&$ v_4 $&$\leq $&$v_3$ \\
\hline
8 & &$\boldsymbol{8}$ & & 8 & & 8 & &$\boldsymbol{8}$ & & 9\\
\hline
8 & & $\boldsymbol{9}$ & & 9 & & 9 & & $\boldsymbol{9}$& & 9  \\
\hline
\end{tabular}
\vskip .2cm
\caption{The cases when $r_3=8$ and $v_3=9$}
\end{table}

\noindent In both cases $\mathcal{M}(4)$ must be extremal. \ Consequently, after building $\mathcal M(4)$ with proper higher order moments, we may resolve the cases by checking consistency. \ We will take a careful look at extremal extensions $\mathcal{M}(4)$ of $\mathcal{M}(3)$ in future work. \ In Sections \ref{sec-r-7} and \ref{sec-r-8} below, we dispose of the remaining extremal cases of the sextic moment problem.

\bigskip


\section{The Case ${\operatorname{rank}\;\mathcal{M} (3)=\operatorname{card}\; \mathcal V=7}$ }\label{sec-r-7}
Let $\{\emph{1}, X, Y, X^2, X Y, Y^2,X^3, X^2 Y, XY^2,Y^3\}$ denote the set of columns in $\mathcal M(3)$. \ In the sequel, assume $\operatorname{rank}\mathcal M (3):=r$ and $\operatorname{card}\;\mathcal V_{\b}:=v$, and write $\mathcal V_{\b} \equiv \{(x_1,y_1),\cdots, (x_v,y_v)\}$. \ The generalized Vandermonde matrix associated with $\mathcal{M}(3)$ is 
\begin{equation}\label{}
W:=\begin{pmatrix}
1&x_1&y_1&{x_1}^2& x_1 y_1&{y_1}^2 &{x_1}^3 & x_1^2 y_1& x_1y_1^2 & {y_1}^3\\
1&x_2&y_2&{x_2}^2& x_2 y_2&{y_2}^2 &{x_2}^3 & x_2^2 y_2& x_2y_2^2 & {y_2}^3\\
\vdots & \vdots & \vdots & \vdots & \vdots & \vdots & \vdots & \vdots & \vdots & \vdots \\
1&x_v&y_v&{x_v}^2& x_v y_v&{y_v}^2 &{x_v}^3 & x_v^2 y_v& x_vy_v^2 & {y_v}^3
\end{pmatrix}.
\end{equation}

Just as we do for the columns of $\mathcal{M}(3)$, we will label the columns of $W$ using $\textit{1}, X, Y, \cdots$. \ Given a basis $\mathcal{B}$ for the column space of $\mathcal{M}(3)$, let $W_{\mathcal B}$ denote the compression of the generalized Vandermonde matrix $W$ to $\mathcal B$. \  

In this section we study the extremal moment problem for a moment matrix $\mathcal M(3)$ satisfying
\begin{equation}\label{rv7}
\mathcal M (3) \geq 0, \mathcal M (2)>0, \text{ and }r=v=7.
\end{equation}
Set $\mathcal V\equiv \mathcal V_{\beta}:=\{(x_1,y_1),\cdots, (x_7,y_7)\}$. \ Because of the variety condition and the invertibility of $\mathcal M (2)$, there is only one linearly independent column amongst $X^3$, $X^2 Y$, $XY^2$, and $Y^3$. \ Thus, the monomial basis of the column space $\mathcal C_{\mathcal M (3)}$ must be one of the following:
\medskip
\begin{itemize}
  \item []Case 1. \ $\mathcal B_1:=\{\textit{1},X,Y,X^2,XY,Y^2,X^3\}$
\medskip
  \item[] Case 2. \ $\mathcal B_2:=\{\textit{1},X,Y,X^2,XY,Y^2,X^2 Y\}$
\medskip
\item[] Case 3. \ $\mathcal B_3:=\{\textit{1},X,Y,X^2,XY,Y^2,X Y^2\}$
\medskip
 \item[] Case 4. \ $\mathcal B_4:=\{\textit{1},X,Y,X^2,XY,Y^2,Y^3\}$
\end{itemize}

\medskip
Recall that consistency of $\beta$ is a necessary condition for the existence of a representing measure, and it is also sufficient in the extremal case. \ Thus, the key to the solution of the truncated moment problem is checking consistency of the moment sequence. \ Weak consistency and a numerical condition about moments solve Case 1, as follows. \ But first, we recall a useful property of weak consistency.

\begin{lemma}
\label{lem31}(\cite[Lemma 2.7]{tcmp11}) \ The following are equivalent for $\beta $ extremal: \newline
i) $\beta$ is weakly consistent; \newline
ii) For any basis ${\mathcal{B}}$ of $\mathcal{C}_{\mathcal{M}(n)}$, the Vandermonde matrix $W_{\mathcal{B}}$ is
invertible; \newline
iii) There exists a basis ${\mathcal{B}}$ of $\mathcal{C}_{\mathcal{M}(n)}$
such that the Vandermonde matrix $W_{\mathcal{B}}$ is invertible.
\end{lemma}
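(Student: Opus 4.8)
The plan is to prove the cyclic chain (ii)$\Rightarrow$(iii)$\Rightarrow$(i)$\Rightarrow$(ii). Two elementary facts will be used throughout. First, \emph{extremality}: since $r:=\operatorname{rank}\mathcal M(n)=\operatorname{card}\mathcal V_\beta=:v$, every basis $\mathcal B$ of the column space $\mathcal C_{\mathcal M(n)}$ has exactly $v$ elements, so the compression $W_{\mathcal B}$ of the generalized Vandermonde matrix is a \emph{square} $v\times v$ matrix; this is precisely what will allow me to pass between ``$W_{\mathcal B}$ has trivial kernel'' and ``$W_{\mathcal B}$ is invertible.'' Second, the dictionary between columns and coefficient vectors: for $p\in\mathcal P_n$ one has $p(X,Y)=\mathbf 0$ in $\mathcal C_{\mathcal M(n)}$ if and only if $\hat p\in\ker\mathcal M(n)$, and in that case $p$ is one of the polynomials defining $\mathcal V_\beta$, so $p|_{\mathcal V_\beta}\equiv 0$.

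The implication (ii)$\Rightarrow$(iii) is immediate, since $\mathcal C_{\mathcal M(n)}$ admits at least one basis. For (iii)$\Rightarrow$(i), I would take a basis $\mathcal B$ with $W_{\mathcal B}$ invertible and a polynomial $p\in\mathcal P_n$ with $p|_{\mathcal V_\beta}\equiv 0$, and expand $p(X,Y)=\sum_{m\in\mathcal B}c_m\,m(X,Y)$ in the basis. Then $q:=p-\sum_{m\in\mathcal B}c_m m$ lies in $\mathcal P_n$ and satisfies $q(X,Y)=\mathbf 0$, hence $q|_{\mathcal V_\beta}\equiv 0$; subtracting from $p|_{\mathcal V_\beta}\equiv 0$ shows that $\sum_{m\in\mathcal B}c_m m$ vanishes at all $v$ points of $\mathcal V_\beta$, i.e. $W_{\mathcal B}\mathbf c=\mathbf 0$. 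Invertibility of $W_{\mathcal B}$ forces $\mathbf c=\mathbf 0$, so $p(X,Y)=\mathbf 0$, which is weak consistency.

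For (i)$\Rightarrow$(ii), I would fix an arbitrary basis $\mathcal B$ of $\mathcal C_{\mathcal M(n)}$ and suppose $W_{\mathcal B}\mathbf c=\mathbf 0$ for some vector $\mathbf c=(c_m)_{m\in\mathcal B}$. Setting $p:=\sum_{m\in\mathcal B}c_m m\in\mathcal P_n$, the relation $W_{\mathcal B}\mathbf c=\mathbf 0$ says exactly that $p$ vanishes at every point of $\mathcal V_\beta$, so weak consistency gives $p(X,Y)=\mathbf 0$, i.e. $\sum_{m\in\mathcal B}c_m\,m(X,Y)=\mathbf 0$. Since the columns $\{m(X,Y):m\in\mathcal B\}$ form a basis of $\mathcal C_{\mathcal M(n)}$, they are linearly independent, so $\mathbf c=\mathbf 0$; thus $\ker W_{\mathcal B}=\{\mathbf 0\}$, and being square, $W_{\mathcal B}$ is invertible.

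I do not expect a serious obstacle: the whole content is that $W_{\mathcal B}$ represents the point-evaluation map on $\mathcal C_{\mathcal M(n)}$ written in the basis coming from $\mathcal B$, and weak consistency is exactly the injectivity of that map. The one point to get right is the bookkeeping — that the coefficients $c_m$ are indexed by a basis of the \emph{column space}, so that a column relation $\sum_{m\in\mathcal B}c_m\,m(X,Y)=\mathbf 0$ really does force $\mathbf c=\mathbf 0$ — together with the genuine use of extremality to make $W_{\mathcal B}$ square, without which injectivity would not upgrade to invertibility.
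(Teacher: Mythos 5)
Your proof is correct. The paper does not actually prove this lemma --- it is quoted from \cite[Lemma 2.7]{tcmp11} --- but your cyclic argument (ii)$\Rightarrow$(iii)$\Rightarrow$(i)$\Rightarrow$(ii) is essentially the standard one from that reference: you correctly identify $W_{\mathcal B}$ as the matrix of the evaluation map at the points of $\mathcal V_\beta$ in the basis $\mathcal B$, use extremality ($r=v$) to make it square, and use the equivalence $p(X,Y)=\mathbf 0 \Leftrightarrow \hat p\in\ker\mathcal M(n)$ (hence $p|_{\mathcal V_\beta}\equiv 0$) to pass between column relations and vanishing on the variety. No gaps.
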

 
\begin{theorem} \ (Case 1)
\ Suppose $\mathcal M (3)(\beta)$ satisfies (\ref{rv7}). \ Let $\mathcal B_1$
be a basis for $\mathcal C_{\mathcal M (3)}$.
Then $\beta$ has a representing measure if and only if
$\mathcal M (3)$ is {weakly consistent} and
for $0\leq i+j\leq2$,
$$
\Lambda_\b(x^i y^j(x^4-a_{00} - a_{10} x-a_{01} y - a_{20} x^2- a_{11} x y - a_{02} y^2 - a_{30} x^3))=0,
$$
where $(a_{00} , a_{10} ,a_{01}, a_{20} , a_{11}, a_{02} , a_{30} )^T=W_{\mathcal B_1}^{-1} (x_1^4, \cdots, x_7^4)^T$.
\end{theorem}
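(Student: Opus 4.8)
The plan is to invoke the extremal characterization in Theorem \ref{thm13}: since $r=v=7$, a representing measure exists if and only if $\mathcal M(3)\geq 0$ and $\beta$ is consistent. Positivity and the hypothesis $\mathcal M(2)>0$ are given by (\ref{rv7}), so the entire content of the theorem reduces to showing that, under the stated basis hypothesis, \emph{consistency} is equivalent to the conjunction of \emph{weak consistency} together with the six scalar equations $\Lambda_\b(x^iy^j\,q)=0$ for $0\leq i+j\leq 2$, where $q(x,y):=x^4-(a_{00}+a_{10}x+a_{01}y+a_{20}x^2+a_{11}xy+a_{02}y^2+a_{30}x^3)$ and the coefficient vector is $W_{\mathcal B_1}^{-1}(x_1^4,\dots,x_7^4)^T$.

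First I would analyze the ideal $\mathcal I_{\mathcal V}$ of polynomials of degree $\le 6$ vanishing on $\mathcal V$. Because $\mathcal B_1=\{\textit{1},X,Y,X^2,XY,Y^2,X^3\}$ is a basis, the column relations of $\mathcal M(3)$ express $X^2Y$, $XY^2$, $Y^3$ as linear combinations of the basis columns; call the corresponding degree-$3$ polynomials $g_1,g_2,g_3$ (these vanish on $\mathcal V$ by Proposition \ref{prop-3-1}). Next, since $W_{\mathcal B_1}$ is invertible (this is where weak consistency enters, via Lemma \ref{lem31}), the Vandermonde system $W_{\mathcal B_1}(a_{00},\dots,a_{30})^T=(x_1^4,\dots,x_7^4)^T$ has a unique solution, so $q$ vanishes on all seven points of $\mathcal V$, i.e.\ $q\in\mathcal I_{\mathcal V}$ and $q$ has degree $4$ with leading term $x^4$. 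The key algebraic step is then to run the Division Algorithm (Theorem \ref{divalg}) with $F=(g_1,g_2,g_3,q)$ under a suitable monomial order (lex or graded lex with $x>y$, chosen so the leading terms are exactly $x^2y$, $xy^2$, $y^3$, $x^4$): I claim every $p\in\mathcal P_6$ vanishing on $\mathcal V$ has remainder $0$ on division by $F$, so that $p=\sum_i b_i g_i + b\,q$ for polynomials $b_i,b$ with the degree control $\operatorname{multideg}(b_i g_i),\operatorname{multideg}(b\,q)\le\operatorname{multideg}(p)\le 6$. To see the remainder must vanish: the monomials not divisible by any of $\{x^2y,xy^2,y^3,x^4\}$ and of degree $\le 6$ are exactly $1,x,y,x^2,xy,y^2,x^3$ — precisely the basis monomials $\mathcal B_1$ — so the remainder $r$ is a polynomial in $\mathcal P_3$ supported on $\mathcal B_1$; but $r=p-\sum b_ig_i-bq$ vanishes on $\mathcal V$ (all seven points), and by weak consistency/invertibility of $W_{\mathcal B_1}$ a $\mathcal B_1$-supported polynomial vanishing on $\mathcal V$ is the zero polynomial, hence $r=0$.

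With this representation in hand, consistency becomes a finite check. Given any $p\in\mathcal P_6$ with $p|_{\mathcal V}\equiv 0$, write $p=\sum_{i=1}^3 b_ig_i+b\,q$. Since $g_i$ arise from column relations of $\mathcal M(3)$ and $\deg b_i\le 3$, recursiveness (implied by weak consistency) gives $\Lambda_\b(b_ig_i)=\langle\mathcal M(3)\hat{b_i'},\widehat{\,\cdot\,}\rangle$-type identities showing $\Lambda_\b(b_ig_i)=0$; more precisely one uses (\ref{recureq}) together with the fact that $g_i(X,Y)=\mathbf 0$ to conclude $\Lambda_\b(h\,g_i)=0$ for all $h\in\mathcal P_3$. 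It remains to handle $\Lambda_\b(b\,q)$; from the degree bound $\deg(bq)\le 6$ and $\deg q=4$ we get $\deg b\le 2$, so $b$ is a linear combination of the monomials $x^iy^j$ with $0\le i+j\le 2$, and therefore $\Lambda_\b(b\,q)$ is a linear combination of the six quantities $\Lambda_\b(x^iy^j q)$. Thus $\Lambda_\b(p)=0$ for every $p\in\mathcal I_{\mathcal V}$ if and only if those six quantities vanish, which is exactly consistency. The converse direction (a representing measure forces weak consistency and forces the six equations) is immediate, since existence of $\mu$ implies consistency and hence $\Lambda_\b(x^iy^jq)=\int x^iy^j q\,d\mu=0$ as $q$ vanishes on $\operatorname{supp}\mu\subseteq\mathcal V$, and it implies weak consistency as recalled in the Introduction.

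I expect the main obstacle to be the bookkeeping in the Division Algorithm: one must choose the monomial order carefully so that the leading terms of $g_1,g_2,g_3,q$ are precisely $x^2y,xy^2,y^3,x^4$ (this requires knowing, or arranging, that the column relations have that leading-term structure — which is automatic here because $X^2Y,XY^2,Y^3$ are the columns being eliminated in favor of $\mathcal B_1$), and then verifying that the only standard monomials of degree $\le 6$ are the seven monomials of $\mathcal B_1$. A minor subtlety is that the Division Algorithm's remainder is not unique in general, but uniqueness is not needed: \emph{any} output of the algorithm gives a valid representation, and the vanishing of the remainder follows from weak consistency regardless of which valid remainder the algorithm produces. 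The degree estimate $\operatorname{multideg}(a_if_i)\le\operatorname{multideg}(f)$ in Theorem \ref{divalg} is what keeps all the pieces within $\mathcal P_6$, and this is essential for applying recursiveness to the $g_i$-terms and for bounding $\deg b\le 2$.
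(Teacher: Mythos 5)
Your proposal is correct and follows essentially the same route as the paper: invoke the extremal characterization (Theorem \ref{thm13}), use the invertibility of $W_{\mathcal B_1}$ (via Lemma \ref{lem31}) to produce the auxiliary quartic with leading term $x^4$, apply the Division Algorithm with the three column-relation cubics plus that quartic, kill the remainder by the same Vandermonde argument, and reduce consistency to $\Lambda_\b(hq_k)=0$ (automatic from (\ref{recureq})) together with the six stated equations. Your additional care in identifying the standard monomials and the degree bound $\deg b\le 2$ only makes explicit what the paper leaves implicit.
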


\begin{proof} Let $q_k(X,Y)=\mathbf{0}$ denote the column relation in the $k$-th column of $\mathcal{M}(3)$ for $k=8,9,10$.
\ Since 
\begin{equation}\label{}
W_{\mathcal B_1} \equiv \begin{pmatrix}
1&x_1&y_1&{x_1}^2& x_1 y_1&{y_1}^2 &{x_1}^3\\
1&x_2&y_2&{x_2}^2&x_2 y_2 &{y_2}^2 &{x_2}^3\\
\vdots & \vdots & \vdots & \vdots & \vdots & \vdots & \vdots \\
1&x_7&y_7&{x_7}^2&x_7 y_7 &{y_7}^2 & {x_7}^3
\end{pmatrix}
\end{equation}
is invertible, there exists a unique polynomial with the leading monomial $x^4$ that vanishes on the variety $\mathcal V$, say,
\begin{eqnarray*}
s(x,y):= x^4-(a_{00} + a_{10} x+a_{01} y + a_{20} x^2+ a_{11} x y + a_{02} y^2 + a_{30} x^3),
\end{eqnarray*}
where $(a_{00} , a_{10} ,a_{01}, a_{20} , a_{11}, a_{02} , a_{30} )^T=W_{\mathcal B_1}^{-1} (x_1^4, \cdots, x_7^4)^T$.

Set $\mathcal I:=\{p\in \mathcal P_6: p|_{\mathcal V}\equiv 0\}$. \ 
Applying the Division Algorithm (Theorem \ref{divalg}), any $p\in \mathcal I$ can be written as
\begin{eqnarray*}
p=Aq_8+B q_{9} +C q_{10} + Ds+r,
\end{eqnarray*}
where $A,B,C\in\mathcal P_3$, $D\in\mathcal P_2$ and
$r(x,y)= c_{00} + c_{10} x+c_{01} y + c_{20} x^2+ c_{11} x y + c_{02} y^2 + c_{30} x^3$ for some $ c_{00}, \cdots ,c_{02}, c_{30}\in\mathbb R$.

We now claim that $\mathcal I = \{ f q_8+ g q_9+h q_{10} + qs:f,g,h\in \mathcal P_3, q\in \mathcal P_2\} $ by showing $r(x,y)\equiv 0$. \ Note that since $p$ vanishes on $\mathcal V$, so does $r$, which leads to the following linear system:
$$
W_{\mathcal B_1}
\left(\begin{array}{ccccccc}
c_{00} &c_{10} &c_{01} &c_{20}&c_{11}&c_{02}&c_{30}
\end{array} \right)^T
= \left(\begin{array}{ccc}
0&\cdots&0
\end{array}\right)^T.
$$
The matrix in the left-hand side is invertible, so we know $c_{00} =c_{10} =c_{01} =c_{20} =c_{11}=c_{02}$ \linebreak $ =c_{30}=0$, which means $r(x,y) \equiv 0$.

Consequently, $\beta$ is consistent if and only if
\begin{eqnarray*}
\left\{
\begin{array}{ll}
 \Lambda_{\beta}(x^i y^j q_k(x,y))=0 & (0\leq i+j\leq 3 ;\, k=8,9,10); \\
 \Lambda_{\beta}(x^t y^u s(x,y))=0 & (0\leq t+u\leq 2).
                \end{array}\right.
\end{eqnarray*}
But both conditions are immediate from (\ref{recureq}), the column relations in $\mathcal M (3)$, and the hypotheses.
\end{proof}

For Case 2, weak consistency, together with positivity and the variety condition, is enough to solve the moment problem; the proof mimics that of Case 1.

\begin{theorem} \ (Case 2)
\ Suppose $\mathcal M (3)(\beta)$ satisfies (\ref{rv7}). \ Let $\mathcal B_2$
be a basis for $\mathcal C_{\mathcal M (3)}$.
Then $\beta$ admits a representing measure if and only if
$\mathcal M (3)$ is {weakly consistent}.
\end{theorem}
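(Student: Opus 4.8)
The plan is to follow the template established in the proof of Case 1, but with the distinguished basis $\mathcal B_2 = \{\textit{1},X,Y,X^2,XY,Y^2,X^2Y\}$ in place of $\mathcal B_1$. The necessity direction is clear: weak consistency is always a necessary condition for the existence of a representing measure (as recalled in the introduction, since $\operatorname{supp}\mu\subseteq\mathcal V$). So the content is the sufficiency direction. First I would use Lemma \ref{lem31}: since $\mathcal M(3)$ is weakly consistent and extremal, every Vandermonde matrix $W_{\mathcal B}$ is invertible; in particular $W_{\mathcal B_2}$ is invertible. Because $\mathcal M(2)>0$ and the variety condition forces only one independent column among $X^3, X^2Y, XY^2, Y^3$, the three column relations $q_k(X,Y)=\mathbf 0$ ($k=8,9,10$) are present, and one of them — the one expressing $X^3$ in terms of the basis $\mathcal B_2$ — has leading term $x^3$; call the three relations $q_8,q_9,q_{10}$ with $q_8$ associated to the column $X^3$, and note $q_9,q_{10}$ are the relations for $XY^2$ and $Y^3$, with leading terms $xy^2$ and $y^3$ respectively.

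Next I would invoke Theorem \ref{divalg} with the ordered tuple $F=(q_8,q_9,q_{10})$ and a monomial order (graded lex) for which the leading terms are $x^3$, $xy^2$, $y^3$. Given any $p\in\mathcal I:=\{p\in\mathcal P_6: p|_{\mathcal V}\equiv 0\}$, the Division Algorithm writes $p = A q_8 + B q_9 + C q_{10} + r$, where the remainder $r$ is a combination of monomials none of which is divisible by $x^3$, $xy^2$, or $y^3$. The monomials of degree $\le 6$ surviving this condition are exactly $1, x, y, x^2, xy, y^2, x^2y$ (any monomial of degree $\ge 3$ other than $x^2y$ is divisible by one of $x^3, xy^2, y^3$, and among degree $3$ monomials $x^3, xy^2, y^3$ are themselves excluded while $x^2y$ is not). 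Hence $r$ lies in $\operatorname{span}\mathcal B_2$, i.e. $r(x,y)=c_{00}+c_{10}x+c_{01}y+c_{20}x^2+c_{11}xy+c_{02}y^2+c_{21}x^2y$. Since $p$ and each $q_k$ vanish on $\mathcal V$, so does $r$; evaluating at the seven points of $\mathcal V$ gives $W_{\mathcal B_2}(c_{00},\dots,c_{21})^T=\mathbf 0$, and invertibility of $W_{\mathcal B_2}$ forces $r\equiv 0$. Therefore $\mathcal I = \{fq_8+gq_9+hq_{10}: f,g,h\in\mathcal P_3\}$ — here I should double-check the degree bookkeeping so that $\deg(fq_k)\le 6$ is consistent with the ``multideg'' clause of Theorem \ref{divalg}, which guarantees $\operatorname{multideg}(a_i f_i)\le\operatorname{multideg}(p)$, hence each term has degree $\le 6$.

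Finally, consistency of $\beta$ means $\Lambda_\beta(p)=0$ for all $p\in\mathcal I$; by the representation just obtained it suffices to show $\Lambda_\beta(x^iy^j q_k(x,y))=0$ for $0\le i+j\le 3$ and $k=8,9,10$. But this is immediate from \eqref{recureq}: writing $q_k = X^{(\cdot)}- (\text{basis combination})$, the quantity $\Lambda_\beta(x^iy^j q_k)$ equals $\langle \mathcal M(3)\widehat{q_k}, \widehat{x^iy^j}\rangle$ for $i+j\le 3$, which is the inner product of a column-dependence vector with a standard basis vector, hence zero. So positivity, weak consistency and the variety condition yield consistency, and Theorem \ref{thm13} then produces the (unique, $\operatorname{rank}\mathcal M(3)$-atomic) representing measure. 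The main obstacle — though a mild one — is purely combinatorial: correctly identifying which three column relations are available and verifying that under a suitable monomial order their leading terms are precisely $\{x^3, xy^2, y^3\}$, so that the complement of the corresponding monomial ideal, intersected with $\mathcal P_6$, is exactly $\mathcal B_2$. One should be slightly careful that the relation for $X^3$ really can be taken with leading term $x^3$ rather than a lower term absorbed by the others; this follows because $X^2Y$ is in the basis, so the $X^3$ relation cannot have its leading term cancelled, and the graded lex order with $x>y$ makes $x^3$ the leading term.
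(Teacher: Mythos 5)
Your proposal reproduces the paper's proof of Case 2 essentially step for step: divide any $p\in\mathcal I$ by the three cubic column relations, argue that the remainder lies in $\operatorname{span}\mathcal B_2$, kill it using the invertibility of $W_{\mathcal B_2}$ (Lemma \ref{lem31}), and reduce consistency to $\Lambda_\beta(x^iy^jq_k)=0$ for $i+j\le 3$, which is automatic from (\ref{recureq}) since $\hat q_k\in\ker\mathcal M(3)$; Theorem \ref{thm13} then finishes. (The paper labels the $X^3$ relation $q_7$ rather than $q_8$, but that is cosmetic.)

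The one place where you go beyond the paper --- fixing graded lex with $x>y$ and asserting that the leading terms of the three relations are exactly $x^3$, $xy^2$, $y^3$ --- is precisely where the argument is delicate, and your justification covers only the $X^3$ relation. The relations for $XY^2$ and $Y^3$ express those columns in terms of $\mathcal B_2$, which contains $X^2Y$; if either relation carries a nonzero $x^2y$ coefficient then, since $x^2y>xy^2>y^3$ in graded lex with $x>y$, its leading term is $x^2y$, not $xy^2$ or $y^3$. Nor can this be repaired by a better choice of order: for any monomial order on $\mathbb R[x,y]$ the degree-three monomials are ordered monotonically (their weights form an arithmetic progression), so no order places $x^2y$ below all of $x^3$, $xy^2$, $y^3$ --- this is exactly why Cases 1 and 4, where the retained cubic monomial is $x^3$ or $y^3$, are clean while Cases 2 and 3 are not. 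If a leading term is $x^2y$, the remainder guaranteed by Theorem \ref{divalg} is only known to avoid multiples of the actual leading terms, which leaves strictly more than the seven monomials of $\mathcal B_2$, and the step ``$W_{\mathcal B_2}\hat r=0\Rightarrow r\equiv 0$'' no longer applies. Concretely, reducing $x^4$ leads to $x^3y$, whose reduction via $y\cdot q_{X^3}$ reintroduces $x^2y^2$, whose reduction via $x\cdot q_{XY^2}$ reintroduces $x^3y$; one must show this loop is solvable (equivalently, that $\operatorname{span}\{x^iy^jq_k: i+j\le 3\}+\operatorname{span}\mathcal B_2=\mathcal P_6$, e.g.\ by a syzygy or dimension count). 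To be fair, the paper's own proof asserts the form of the remainder without addressing any of this, so your write-up inherits rather than creates the difficulty; but as written, the leading-term claim is the step that can fail, and an additional argument is needed to close it.
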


\begin{proof}
Let $q_k(X,Y)=\mathbf{0}$ is the column relation in $i$th column for $k=7,9,10$. \ 
Set $\mathcal I:=\{p\in \mathcal P_6: p|_{\mathcal V}\equiv 0\}$. \ 
Due to the Division Algorithm, we may write any $p\in \mathcal I$ as
\begin{eqnarray*}
p=Aq_7+B q_{9} +C q_{10} + r,
\end{eqnarray*}
where $A,B,C\in\mathcal P_3$ and
$r(x,y)= c_{00} + c_{10} x+c_{01} y + c_{20} x^2+ c_{11} x y + c_{02} y^2 + c_{21} x^2 y$ for some $ c_{00}, \cdots ,c_{02}, c_{21}\in\mathbb R$.

Once we establish that $r(x,y)\equiv 0$, we may verify that \linebreak
$\mathcal I = \{ f q_7+ g q_9+h q_{10}:f,g,h\in \mathcal P_3\} $. \ Note that 
$p\vert_{\mathcal V}\equiv 0 \Longrightarrow r\vert_{\mathcal V}\equiv 0$. \ 
This argument brings up the following linear system:
\begin{eqnarray*}
W_{\mathcal B_2}
\begin{pmatrix}
c_{00} &c_{10} &c_{01} &c_{20}&c_{11}&c_{02}&c_{21}
\end{pmatrix}^T =
\begin{pmatrix}
0&\cdots&0
\end{pmatrix}^T.
\end{eqnarray*}
The invertibility of $W_{\mathcal B_2}$ implies that $c_{00} =c_{10} =c_{01} =c_{20} =c_{11} =c_{02} =c_{21}=0$, \linebreak
i.e., $r(x,y) \equiv 0$. \ Therefore, we need only three polynomials attained from column relations in $\mathcal M (3)$ to check consistency of $\beta$. \ The test is straightforward, since $\beta$ is consistent if and only if
\begin{eqnarray*}
 \Lambda_{\beta}(x^i y^j q_k(x,y))=0 & (0\leq i+j\leq 3 ;\, k=7,9,10),
\end{eqnarray*}
which is inherent in $\mathcal M (3)$, using (\ref{recureq}). \ This completes the proof.
\end{proof}

In order to solve Case 3, we can use the same approach as in Case 2; the proof is omitted.

\begin{theorem} \ (Case 3) \
Suppose $\mathcal M (3)(\beta)$ satisfies (\ref{rv7}). \ Let $\mathcal B_3$ 
be a basis for $\mathcal C_{\mathcal M (3)}$.
Then $\beta$ admits a representing measure if and only if
$\mathcal M (3)$ is {weakly consistent}.
\end{theorem}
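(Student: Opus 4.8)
The plan is to mimic, almost verbatim, the structure of the proof of Case 2, adapting only the choice of monomial basis and the corresponding column relations. We work with $\mathcal B_3=\{\textit{1},X,Y,X^2,XY,Y^2,XY^2\}$, so among the degree-three columns the dependent ones are $X^3$, $X^2Y$, and $Y^3$; denote by $q_k(X,Y)=\mathbf 0$ the column relation expressing the $k$-th column ($k=7,8,10$, i.e.\ $X^3$, $X^2Y$, $Y^3$) in terms of the basis $\mathcal B_3$. Since the theorem assumes (\ref{rv7}), $\mathcal M(3)\ge 0$ and $\mathcal M(2)>0$ and $r=v=7$. For the forward implication (measure $\Rightarrow$ weak consistency) there is nothing new: a representing measure forces weak consistency, as recorded in the Introduction. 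So the content is the converse.

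Assume $\mathcal M(3)$ is weakly consistent. By Theorem \ref{thm13}, since $\beta$ is extremal and $\mathcal M(3)\ge 0$, it suffices to prove that $\beta$ is consistent, i.e.\ that $\Lambda_\beta(p)=0$ for every $p\in\mathcal P_6$ with $p|_{\mathcal V}\equiv 0$. Set $\mathcal I:=\{p\in\mathcal P_6: p|_{\mathcal V}\equiv 0\}$. Fix a monomial order (graded lex with $x>y$ works, so that the leading terms of $q_7,q_8,q_{10}$ are $x^3$, $x^2y$, $y^3$) and apply the Division Algorithm (Theorem \ref{divalg}) to divide an arbitrary $p\in\mathcal I$ by $F=(q_7,q_8,q_{10})$:
\begin{equation*}
p=Aq_7+Bq_8+Cq_{10}+r,\qquad A,B,C\in\mathcal P_3,
\end{equation*}
where no monomial of $r$ is divisible by $x^3$, $x^2y$, or $y^3$; hence $r$ is a linear combination of $\textit{1},x,y,x^2,xy,y^2,xy^2$, say $r(x,y)=c_{00}+c_{10}x+c_{01}y+c_{20}x^2+c_{11}xy+c_{02}y^2+c_{12}xy^2$. (The multidegree bound in Theorem \ref{divalg} guarantees $A,B,C$ have degree at most $3$, which is what is needed below so that the terms $x^iy^jq_k$ that appear have degree at most $6$ and $\Lambda_\beta$ is defined on them.)

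Next I would show $r\equiv 0$. Since $p|_{\mathcal V}\equiv 0$ and each $q_k|_{\mathcal V}\equiv 0$, we get $r|_{\mathcal V}\equiv 0$; evaluating $r$ at the seven points $(x_1,y_1),\dots,(x_7,y_7)$ of $\mathcal V$ yields the linear system
\begin{equation*}
W_{\mathcal B_3}\begin{pmatrix}c_{00}&c_{10}&c_{01}&c_{20}&c_{11}&c_{02}&c_{12}\end{pmatrix}^T=\begin{pmatrix}0&\cdots&0\end{pmatrix}^T.
\end{equation*}
By Lemma \ref{lem31}, weak consistency of the extremal sequence $\beta$ is equivalent to invertibility of $W_{\mathcal B}$ for every basis $\mathcal B$, in particular for $\mathcal B_3$; hence all $c_{ij}=0$ and $r\equiv 0$. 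Therefore $\mathcal I=\{fq_7+gq_8+hq_{10}: f,g,h\in\mathcal P_3\}$, and $\beta$ is consistent if and only if $\Lambda_\beta(x^iy^jq_k(x,y))=0$ for all $0\le i+j\le 3$ and $k\in\{7,8,10\}$. Finally, each such identity follows directly from (\ref{recureq}) together with the column relations $q_k(X,Y)=\mathbf 0$ in $\mathcal M(3)$: writing $q_k=x^{(k)}\cdot(\text{basis monomial})-(\text{lower terms})$ in coefficient form $\hat q_k$, one has $\Lambda_\beta(x^iy^jq_k)=\langle\mathcal M(3)\hat q_k,\widehat{x^iy^j}\rangle=0$ since $\hat q_k\in\ker\mathcal M(3)$ (here $i+j\le 3$ so $\widehat{x^iy^j}$ makes sense in the $\mathcal M(3)$ framework). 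Thus $\beta$ is consistent, and by Theorem \ref{thm13} it admits a (unique, $7$-atomic) representing measure.

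I do not expect any genuine obstacle: the argument is parallel to Case 2. The only point demanding a little care is bookkeeping with the monomial order and the degree bounds from the Division Algorithm — one must check that graded lex really does produce leading terms $x^3,x^2y,y^3$ for $q_7,q_8,q_{10}$ and that the quotients $A,B,C$ land in $\mathcal P_3$ so that the Riesz functional is applied only to polynomials of degree $\le 6$; both are immediate from the multidegree inequality in Theorem \ref{divalg}. This is presumably exactly why the authors chose to omit the proof.
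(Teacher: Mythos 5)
Your overall route is exactly the one the paper intends (it omits the proof of Case 3 and says to mimic Case 2), and most of the scaffolding is right: reduce to consistency via Theorem \ref{thm13}, divide $p\in\mathcal I$ by the three column relations, kill the remainder with the invertible Vandermonde matrix $W_{\mathcal B_3}$ (Lemma \ref{lem31}), and finish with (\ref{recureq}). But there is a genuine gap in the one step you tried to make precise, namely the parenthetical claim that graded lex with $x>y$ produces leading terms $x^3$, $x^2y$, $y^3$ for $q_7,q_8,q_{10}$. Under grlex with $x>y$ one has $xy^2>y^3$, so if the column relation for $Y^3$ has a nonzero $XY^2$-component (which is not excluded by the hypotheses), then $\operatorname{LT}(q_{10})$ is $xy^2$, not $y^3$. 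Worse, this cannot be repaired by choosing a cleverer order: for any monomial order, compatibility with addition applied to $(1,2)+(2,1)=(0,3)+(3,0)$ and $(1,2)+(1,2)=(0,3)+(2,1)$ forces the order on equal-degree monomials to be monotone in the $x$-exponent, so no order places $xy^2$ simultaneously below $x^2y$ and $y^3$. Consequently the Division Algorithm, applied as you apply it, only guarantees a remainder free of multiples of $x^3$, $x^2y$ and $xy^2$ — a span of ten monomials up to degree $6$ (including $y^3,\dots,y^6$) — and the $7\times 7$ system $W_{\mathcal B_3}\mathbf{c}=0$ no longer captures all the unknown coefficients, so the conclusion $r\equiv 0$ and the identity $\mathcal I=\{fq_7+gq_8+hq_{10}\}$ do not follow as written.

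To be fair, this defect is inherited from the paper's own proof of Case 2 (where the same obstruction occurs with $x^2y$ in place of $xy^2$); your attempt to pin down the monomial order simply makes it visible. A repair requires an extra ingredient: either a non-degeneracy argument showing the leftover pure powers of $y$ can still be absorbed, or — more in the spirit of Case 1 and of Section \ref{sec-r-8} — an auxiliary quartic vanishing on $\mathcal V$ (e.g.\ $y^4-\dots$, obtained from $W_{\mathcal B_3}^{-1}$) adjoined to the divisor list so that its leading term excludes the offending monomials, at the price of an additional family of Riesz-functional conditions $\Lambda_\beta(x^iy^j s)=0$ that must then be checked (or shown redundant). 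As it stands, your proof, like the paper's sketch, does not establish the reduction it relies on.
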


\begin{remark} \ (Case 4) \
It is straightforward to see that Case 4 reduces to Case 1 via the invariance of moment problems under degree-one transformations (\cite[Proposition 1.7]{tcmp6}). 
\ Indeed, it suffices to consider the degree-one transformation that interchanges $X$ and $Y$. $\hfill\square$
\end{remark}

\bigskip

\section{The Case ${\operatorname{rank}\;\mathcal M (3)=\operatorname{card}\; \mathcal V=8}$ }\label{sec-r-8}
In this section we discuss the other extremal moment problem for a moment matrix $\mathcal M (3)$ satisfying
\begin{equation}\label{rv8}
\mathcal M (3) \geq 0, \mathcal M (2)>0, \text{ and }r=v=8.
\end{equation}
Write $\mathcal V\equiv \mathcal V_{\beta}:=\{(x_1,y_1),\cdots, (x_8,y_8)\}$. \ Since we assumed the invertibility of the minor block $\mathcal M (2)$ and $\operatorname{rank} \mathcal M (3)=8$, there are two linearly independent columns among $X^3, X^2 Y, XY^2$, and $Y^3$. \ Thus, there are six natural choices for bases of $\mathcal C_{\mathcal M (3)}$:
\medskip
\begin{itemize}
  \item []Case 1. \ $\mathfrak B_1:=\{\textit{1},X,Y,X^2,XY,Y^2,X^3,X^2 Y\}$
\medskip
   \item []Case 2. \ $\mathfrak B_2:=\{\textit{1},X,Y,X^2,XY,Y^2,X^3,X Y^2\}$
\medskip
 \item []Case 3. \ $\mathfrak B_3:=\{\textit{1},X,Y,X^2,XY,Y^2,X^3, Y^3\}$
\medskip
  \item[] Case 4. \ $\mathfrak B_4:=\{\textit{1},X,Y,X^2,XY,Y^2,X^2 Y, XY^2\}$
\medskip
\item[] Case 5. \ $\mathfrak B_5:=\{\textit{1},X,Y,X^2,XY,Y^2,  X^2 Y, Y^3\}$
\medskip
\item[] Case 6. \ $\mathfrak B_6:=\{\textit{1},X,Y,X^2,XY,Y^2,X Y^2,Y^3\}$

\end{itemize}

\medskip
Again, the problem will be solved once we show that the moment sequence is consistent. \ The proofs are very similar to those in the previous section, but we need to find another polynomial vanishing on the algebraic variety. \ Also, we can reduce some cases to a subcase of another via degree-one transformations, that is, interchanging $X$ and $Y$. \ We immediately obtain the following two Claims.
\begin{flushleft}
\medskip
Claim 1. \ Case 6 is a subcase of Case 1.
\end{flushleft}

\begin{flushleft}
\medskip
Claim 2. \ Case 5 is a subcase of Case 2.
\end{flushleft}

\medskip
Thus, it suffices to focus attention on the first four cases and present their solutions. \ The detailed proofs are omitted. \ Instead, we present in the next paragraph a general sketch of a typical proof; with this, the interested reader will be able to fill out the remaining details. \ 

Obviously, the ``only if'' parts of the proofs of Theorems \ref{thm51}, \ref{thm52}, \ref{thm53} and \ref{thm-rv8-4} are trivial, so we work on the converses. \ To establish consistency, we need to show that the Riesz functional is zero for any polynomial vanishing on the algebraic variety, of degree less than or equal to $6$. \ Thus, it is essential to construct a representation of such polynomials, which is done by the Division Algorithm. \ The representing sets contain at most 4 polynomials, two of which come from column relations in $\mathcal M(3)$ and the other two polynomials (which are quartic) are found using the invertibility of the appropriate compression of the generalized Vandermonde matrix (equivalently, by the weak consistency of $\beta$). \ Multiplying polynomials in the representing set by suitable monomials, we can check that the Riesz functional is zero for higher order polynomials.

\begin{theorem} \label{thm51} \ (Case 1) \
Suppose $\mathcal M (3)(\beta)$ satisfies (\ref{rv8}). \ Let $\mathfrak B_1$
be a basis for $\mathcal C_{\mathcal M (3)}$.
Then $\beta$ has a representing measure if and only if
$\mathcal M (3)$ is {weakly consistent} and
for $0\leq i+j\leq2$,
$$\Lambda_\b(x^i y^j(x^4-a_{0} - a_{1} x-a_{2} y - a_{3} x^2- a_{4} x y - a_{5} y^2 - a_{6} x^3 - a_{7} x^2 y ))=0$$
and
$$\Lambda_\b(x^i y^j(x^3y-b_{0} - b_{1} x-b_{2} y - b_{3} x^2- b_{4} x y - b_{5} y^2 - b_{6} x^3- b_{7} x^2y ))=0,$$
where $(a_{0} , a_{1} ,a_{2}, a_{3} , a_{4}, a_{5} , a_{6},a_{7} )^T=W_{\mathfrak B_1}^{-1} (x_1^4, \cdots, x_8^4)^T$\\
and $(b_{0} , b_{1} ,b_{2}, b_{3} , b_{4}, b_{5} , b_{6},b_{7})^T=W_{\mathfrak B_1}^{-1} (x_1^2 y_1, \cdots, x_8^2 y_8)^T$.
\end{theorem}

\begin{theorem} \label{thm52}\ (Case 2) \
Suppose $\mathcal M (3)(\beta)$ satisfies (\ref{rv8}). \ Let $\mathfrak B_2$
be a basis for $\mathcal C_{\mathcal M (3)}$. \ 
Then $\beta$ has a representing measure if and only if
$\mathcal M (3)$ is {weakly consistent} and
for $0\leq i+j\leq2$,
$$\Lambda_\b(x^i y^j(x^4-a_{0} - a_{1} x-a_{2} y - a_{3} x^2- a_{4} x y - a_{5} y^2 - a_{6} x^3 - a_{7} x^2 y ))=0,$$
where $(a_{0} , a_{1} ,a_{2}, a_{3} , a_{4}, a_{5} , a_{6},a_{7} )^T=W_{\mathfrak B_2}^{-1} (x_1^4, \cdots, x_8^4)^T$.
\end{theorem}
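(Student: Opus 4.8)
The plan is to follow the template of Case 1 in the $r=v=8$ section (Theorem \ref{thm51}), but now with only a single quartic polynomial adjoined to the two cubic column relations, reflecting the fact that in $\mathfrak B_2$ the monomial $x^2 y$ is a basis element while $x y^2$ is not. First I would set up notation: since $r=v=8$ and $\mathcal M(2)>0$, the monomial basis is $\mathfrak B_2=\{\textit{1},X,Y,X^2,XY,Y^2,X^3,XY^2\}$, and the three columns not in the basis are $X^2 Y$, $Y^3$, and one of $\{X^3, XY^2\}$'s ``sibling''—more precisely, the column relations express $X^2Y$, $Y^3$, and $X^4$'s analogue. Let me be careful: the dependent columns of $\mathcal M(3)$ are exactly those indexed by monomials \emph{not} in $\mathfrak B_2$ among the degree-$\le 3$ monomials, namely $X^2 Y$ and $Y^3$. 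So let $q_k(X,Y)=\mathbf 0$ denote the two genuine column relations (the ones carrying $X^2Y$ and $Y^3$ as leading terms). Then, exactly as in the Case-1 proof of Section \ref{sec-r-7}, weak consistency (equivalently, invertibility of $W_{\mathfrak B_2}$, by Lemma \ref{lem31}) guarantees the existence of a \emph{unique} quartic $s(x,y)=x^4-(a_0+a_1x+\cdots+a_7 x^2 y)$ vanishing on $\mathcal V$, with coefficient vector $W_{\mathfrak B_2}^{-1}(x_1^4,\dots,x_8^4)^T$.

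Next I would apply the Division Algorithm (Theorem \ref{divalg}) with an appropriate monomial order (graded lex, say, ordered so that the leading terms of the two cubic relations and of $s$ are $x^2 y$, $y^3$, $x^4$). Given any $p\in\mathcal I:=\{p\in\mathcal P_6:p|_{\mathcal V}\equiv 0\}$, division by the tuple $(q_{k_1},q_{k_2},s)$ yields $p=Aq_{k_1}+Bq_{k_2}+Ds+r$ with $A,B\in\mathcal P_3$, $D\in\mathcal P_2$, and remainder $r$ a linear combination of monomials divisible by none of $x^2y,\,y^3,\,x^4$. The monomials of degree $\le 6$ surviving this condition are precisely $\textit{1},x,y,x^2,xy,y^2,x^3,xy^2,x^3y,x^5,\ldots$; I need to check that the degree count works out so that $r$ is supported only on the $8$ basis monomials $\{1,x,y,x^2,xy,y^2,x^3,xy^2\}$. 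This is the combinatorial heart of the argument: I must verify that no ``stray'' surviving monomial such as $x^3 y$ or $x^5$ can actually occur in $r$. The point is that $r=p-Aq_{k_1}-Bq_{k_2}-Ds$ still vanishes on $\mathcal V$, and any such $r$ of degree $\le 6$ that is reduced modulo those three leading terms but has a monomial outside the $8$-element basis set would, after one more reduction step, contradict either reducedness or the degree bound — so I would argue that in fact $r\in\operatorname{span}\mathfrak B_2$. (Concretely, $x^3y = x\cdot x^2 y$ is divisible by the leading term $x^2y$, so it is already excluded; $x^5,x^4y,\dots$ are divisible by $x^4$; one checks the short list and finds only the eight basis monomials remain in degrees $\le 6$ that dodge all three leading terms.) Once $r\in\operatorname{span}\mathfrak B_2$ and $r|_{\mathcal V}\equiv 0$, the linear system $W_{\mathfrak B_2}\,(c_0,\dots,c_7)^T=\mathbf 0$ with $W_{\mathfrak B_2}$ invertible forces $r\equiv 0$.

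It then follows that $\mathcal I=\{fq_{k_1}+gq_{k_2}+hs:f,g\in\mathcal P_3,\ h\in\mathcal P_2\}$, so consistency of $\beta$ is equivalent to $\Lambda_\beta$ annihilating $x^iy^j q_{k_\ell}$ for $0\le i+j\le 3$ ($\ell=1,2$) and $x^iy^j s$ for $0\le i+j\le 2$. The first family is automatic from (\ref{recureq}) and the column relations of $\mathcal M(3)$ (this is just the recursiveness encoded in the moment matrix), and the second family is exactly the stated numerical hypothesis. Invoking Theorem \ref{thm13}, consistency together with $\mathcal M(3)\ge 0$ gives the $\operatorname{rank}\mathcal M(3)$-atomic representing measure; the converse direction is immediate since consistency is necessary. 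The main obstacle, as in all four cases of this section, is the bookkeeping in the Division Algorithm step — confirming that the remainder's support collapses exactly onto $\mathfrak B_2$ (no more, no fewer monomials), and in particular that a single adjoined quartic $s$ (rather than two, as in Case 1) suffices here because $xy^2$ is already a basis column and hence no relation with leading term $x^3y$ or $xy^2$ needs to be manufactured. Everything else is a routine transcription of the Case-1 argument.
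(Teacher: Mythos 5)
Your overall architecture is the one the paper intends (the paper only sketches the proofs of Theorems \ref{thm51}--\ref{thm-rv8-4}): reduce to consistency via Theorem \ref{thm13}, generate $\mathcal I=\{p\in\mathcal P_6:p|_{\mathcal V}\equiv 0\}$ by the two column relations together with one quartic obtained from $W_{\mathfrak B_2}^{-1}$, and observe that the Riesz functional automatically kills the products $x^iy^jq_k$ by (\ref{recureq}). However, there is a genuine gap exactly at the step you yourself call the ``combinatorial heart.'' You invoke Theorem \ref{divalg} ``with an appropriate monomial order \ldots so that the leading terms of the two cubic relations and of $s$ are $x^2y$, $y^3$, $x^4$.'' No such monomial order exists in general: for any monomial order, $x^2y>x^3$ is equivalent to $y>x$ (divide by $x^2$), while $x^2y>xy^2$ is equivalent to $x>y$ (divide by $xy$). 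Since the column relation carried by $X^2Y$ is an expansion in the basis $\mathfrak B_2$, it generically has nonzero coefficients on \emph{both} $x^3$ and $xy^2$, so its true leading term under any admissible order is $x^3$ or $xy^2$, never $x^2y$. Consequently the remainder guaranteed by the Division Algorithm is only known to avoid multiples of $x^3$ (or $xy^2$), $y^3$-type terms, and $x^4$; its support need not collapse onto the eight monomials of $\mathfrak B_2$ (for instance $x^2y$ itself, or $x^3y$, can survive), and then the square Vandermonde argument $W_{\mathfrak B_2}\mathbf c=\mathbf 0\Rightarrow\mathbf c=\mathbf 0$ no longer applies. Your parenthetical (``after one more reduction step, contradict either reducedness or the degree bound'') does not repair this, because that extra reduction step is no longer governed by the algorithm and, done naively with the designated leading monomials $x^2y,y^3,x^4$, can cycle: e.g.\ reducing $x^2y^2$ via $yq_1$ reintroduces $x^3y$ and $xy^3$, which reduce back to a multiple of $x^2y^2$, and one must check a nondegeneracy condition (or adjoin further generators / pass to a Gr\"obner basis of $(q_1,q_2,s)$, or argue by a direct dimension count on $\mathcal J:=\operatorname{span}\{x^iy^jq_k,\;x^iy^js\}$ versus $\dim\mathcal I=20$) for the process to terminate. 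Note that this obstruction is absent in the $r=v=7$ Case 1 proof you are copying, where graded lex with $y>x$ genuinely makes $x^2y,xy^2,y^3,x^4$ the leading terms; it is precisely the presence of $X^3$ \emph{and} $XY^2$ in $\mathfrak B_2$ that breaks the analogy.

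Two smaller points: you state that ``$x^2y$ is a basis element while $xy^2$ is not,'' which is backwards for $\mathfrak B_2=\{\textit{1},X,Y,X^2,XY,Y^2,X^3,XY^2\}$ (you correct yourself a sentence later, but the slip propagates into your description of why only one quartic is needed); and the quartic should read $s=x^4-a_0-\cdots-a_6x^3-a_7xy^2$, since the coefficient vector $W_{\mathfrak B_2}^{-1}(x_1^4,\ldots,x_8^4)^T$ is an expansion in $\mathfrak B_2$, whose last element is $XY^2$ (the displayed $a_7x^2y$ appears to be a typo inherited from the statement).
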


\begin{theorem} \label{thm53}\ (Case 3) \ Suppose $\mathcal M (3)(\beta)$ satisfies (\ref{rv8}). \ Let $\mathfrak B_3$
be a basis for $\mathcal C_{\mathcal M (3)}$. \ 
Then $\beta$ has a representing measure if and only if
$\mathcal M (3)$ is {weakly consistent} and
for $0\leq i+j\leq2$,
$$\Lambda_\b(x^i y^j(x^4-a_{0} - a_{1} x-a_{2} y - a_{3} x^2- a_{4} x y - a_{5} y^2 - a_{6} x^3 - a_{7} x^2 y ))=0$$
and
$$\Lambda_\b(x^i y^j(y^4-b_{0} - b_{1} x-b_{2} y - b_{3} x^2- b_{4} x y - b_{5} y^2 - b_{6} x^3- b_{7} x^2y ))=0,$$
where $(a_{0} , a_{1} ,a_{2}, a_{3} , a_{4}, a_{5} , a_{6},a_{7} )^T=W_{\mathfrak B_3}^{-1} (x_1^4, \cdots, x_8^4)^T$\\
and $(b_{0} , b_{1} ,b_{2}, b_{3} , b_{4}, b_{5} , b_{6},b_{7})^T=W_{\mathfrak B_3}^{-1} ( y_1^4, \cdots,y_8^4)^T$.
\end{theorem}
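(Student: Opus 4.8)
The plan is to mimic the proof of Theorem (Case 1) from Section \ref{sec-r-7}, now adapted to the basis $\mathfrak B_3=\{\textit{1},X,Y,X^2,XY,Y^2,X^3,Y^3\}$. The ``only if'' direction is immediate: consistency is necessary for the existence of a representing measure, and the displayed polynomials have degree at most $6$ and vanish on $\mathcal V$ (by construction, since their coefficient vectors solve the relevant Vandermonde systems), so the Riesz functional must annihilate $x^iy^j$ times each of them for $0\le i+j\le 2$. For the converse, note that weak consistency together with $\mathcal M(3)\ge 0$, $\mathcal M(2)>0$, and the variety condition is exactly the hypothesis of the extremal Theorem \ref{thm13}, so it suffices to establish that $\beta$ is consistent; by Lemma \ref{lem31}, weak consistency is equivalent to invertibility of $W_{\mathfrak B_3}$, which is what makes the two Vandermonde systems uniquely solvable and hence makes the two quartic polynomials $s(x,y):=x^4-\sum a_k(\cdots)$ and $t(x,y):=y^4-\sum b_k(\cdots)$ well-defined.

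The core step is the structure theorem for $\mathcal I:=\{p\in\mathcal P_6:p|_{\mathcal V}\equiv 0\}$. Let $q_k(X,Y)=\mathbf 0$ be the column relations of $\mathcal M(3)$ in columns $k=8,9,10$ (i.e.\ the relations expressing $X^2Y$, $XY^2$, and one combination involving $X^3,Y^3$ — more precisely, since $\mathfrak B_3$ is a basis, the dependent columns are $X^2Y$, $XY^2$, and one of $X^3$ or $Y^3$; in any case three cubics $q_8,q_9,q_{10}$). Fix a monomial order (graded lex with $x>y$ works) so that the leading terms of $q_8,q_9,q_{10},s,t$ are the five monomials $x^2y$, $xy^2$, one cubic among $\{x^3,y^3\}$, $x^4$, and $y^4$. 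Apply the Division Algorithm (Theorem \ref{divalg}) to divide an arbitrary $p\in\mathcal I$ by the ordered tuple $(q_8,q_9,q_{10},s,t)$: we get $p=Aq_8+Bq_9+Cq_{10}+Ds+Et+r$ with $A,B,C\in\mathcal P_3$, $D,E\in\mathcal P_2$, and remainder $r$ a linear combination of monomials none of which is divisible by any of the five leading terms. The monomials of degree $\le 6$ surviving this restriction are precisely the seven basis monomials of $\mathfrak B_3$, namely $1,x,y,x^2,xy,y^2$, together with exactly one of $x^3,y^3$ — i.e.\ $r$ has the same shape as an element in the span of $\mathfrak B_3$'s monomials. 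Since $p|_{\mathcal V}\equiv 0$ and each $q_k,s,t$ vanishes on $\mathcal V$, also $r|_{\mathcal V}\equiv 0$; writing out these seven equations gives $W_{\mathfrak B_3}\hat r^{\,T}=\mathbf 0$, and invertibility of $W_{\mathfrak B_3}$ forces $r\equiv 0$. Hence $\mathcal I=\{fq_8+gq_9+hq_{10}+us+vt:f,g,h\in\mathcal P_3,\ u,v\in\mathcal P_2\}$.

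With the representation in hand, consistency reduces to checking $\Lambda_\beta(p)=0$ for $p$ in this generating set. By linearity of $\Lambda_\beta$ and the fact that multiplying the degree constraints through, every such $p$ is a finite sum of terms $x^iy^jq_k$ with $0\le i+j\le 3$ and terms $x^ty^us$ or $x^ty^ut$ with $0\le t+u\le 2$. The terms $\Lambda_\beta(x^iy^jq_k)=0$ are automatic from \eqref{recureq} and the column relations of $\mathcal M(3)$ (this is the observation already used in the Case 1 and Case 2 proofs of Section \ref{sec-r-7}). The terms $\Lambda_\beta(x^ty^us)=0$ and $\Lambda_\beta(x^ty^ut)=0$ for $0\le t+u\le 2$ are precisely the two families of hypotheses in the statement. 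Therefore $\Lambda_\beta$ annihilates all of $\mathcal I$, $\beta$ is consistent, and Theorem \ref{thm13} yields the $\operatorname{rank}\mathcal M(3)$-atomic representing measure.

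The main obstacle is the bookkeeping at the remainder stage: one must verify that, under the chosen monomial order, the five leading terms $x^2y,xy^2,\{x^3\text{ or }y^3\},x^4,y^4$ genuinely ``cover'' every degree-$\le 6$ monomial except the seven lying in the span of $\mathfrak B_3$ — i.e.\ that a monomial $x^ay^b$ with $a+b\le 6$ not in that list is divisible by one of these five. This is a routine but slightly delicate finite check (e.g.\ $x^3y^2$ is divisible by $x^2y$, $x^5$ by $x^4$, $y^5$ by $y^4$ or $y^3$, etc.), and it is exactly where the specific basis $\mathfrak B_3$ — containing both $X^3$ and $Y^3$, hence requiring \emph{two} quartic generators rather than one — differs from Cases 2 of the previous section. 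A secondary subtlety is confirming that the column relations $q_8,q_9,q_{10}$ really do have leading terms disjoint from $\{x^4,y^4\}$ and from each other under the order chosen; since $\mathfrak B_3$ is a column basis, the dependent columns are indexed by monomials of degree exactly $3$, so their relations, viewed as polynomials, have leading term that cubic monomial, and disjointness follows. Everything else is a transcription of the Case 1 argument of Section \ref{sec-r-7} with $t(x,y)$ adjoined.
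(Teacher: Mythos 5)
Your overall strategy is the paper's: use the Division Algorithm to represent every $p\in\mathcal I:=\{p\in\mathcal P_6:p|_{\mathcal V}\equiv 0\}$ in terms of the cubic column relations plus quartics built from $W_{\mathfrak B_3}^{-1}$, kill the remainder via invertibility of $W_{\mathfrak B_3}$, and then reduce consistency to the finitely many Riesz-functional evaluations that are either built into $\mathcal M(3)$ via (\ref{recureq}) or assumed as hypotheses. That matches the sketch the paper gives for Section \ref{sec-r-8}.

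However, there is a concrete error in your description of the generating set, which is the central object of the argument. Since $\operatorname{rank}\mathcal M(3)=8$ and $\mathfrak B_3=\{\textit{1},X,Y,X^2,XY,Y^2,X^3,Y^3\}$ is a basis for $\mathcal C_{\mathcal M(3)}$, the columns $X^3$ and $Y^3$ are \emph{independent}; the only dependent columns are $X^2Y$ and $XY^2$, so there are exactly \emph{two} cubic column relations, not three. Your parenthetical claim that ``the dependent columns are $X^2Y$, $XY^2$, and one of $X^3$ or $Y^3$ --- in any case three cubics'' contradicts the hypothesis that $\mathfrak B_3$ is a basis (and contradicts your own later remark that $\mathfrak B_3$ contains both $X^3$ and $Y^3$). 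The paper's sketch is explicit on this point: the representing set has at most $4$ polynomials, \emph{two} from column relations and \emph{two} quartics. This error propagates: with the correct divisor tuple $(q_{X^2Y},q_{XY^2},s,t)$, whose leading terms are $x^2y,\ xy^2,\ x^4,\ y^4$, the remainder $r$ is supported on all \emph{eight} monomials $1,x,y,x^2,xy,y^2,x^3,y^3$ (not seven), and the system $r(x_i,y_i)=0$, $i=1,\dots,8$, is exactly $W_{\mathfrak B_3}\hat r=\mathbf 0$ with $W_{\mathfrak B_3}$ an invertible $8\times 8$ matrix; as you wrote it, a $7$-entry $\hat r$ and ``seven equations'' do not match the $8\times 8$ Vandermonde you invoke. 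The repair is routine --- drop the phantom third cubic relation, let $r$ range over all eight $\mathfrak B_3$-monomials, and check (as you essentially do) that $x^2y,xy^2,x^4,y^4$ divide every monomial of degree $\le 6$ outside that list --- after which the consistency bookkeeping in your final paragraph goes through unchanged. But as written the structure theorem for $\mathcal I$ is stated with an incorrect generating set, so the proof does not stand without this correction.
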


\begin{theorem} \label{thm-rv8-4} \ (Case 4) Suppose $\mathcal M (3)(\beta)$ satisfies (\ref{rv8}). \ Let $\mathfrak B_4$
be a basis for $\mathcal C_{\mathcal M (3)}$. \ 
Then $\beta$ has a representing measure if and only if
$\mathcal M (3)$ is {weakly consistent} and
for $0\leq i+j\leq2$,
$$\Lambda_\b(x^i y^j(x^2 y^2-a_{0} - a_{1} x-a_{2} y - a_{3} x^2- a_{4} x y - a_{5} y^2 - a_{6} x^3 - a_{7} x^2 y ))=0,$$
where $(a_{0} , a_{1} ,a_{2}, a_{3} , a_{4}, a_{5} , a_{6},a_{7} )^T=W_{\mathfrak B_4}^{-1} (x_1^2 y_1^2, \cdots, x_8^2 y_8^2)^T$.
\end{theorem}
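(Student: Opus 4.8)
The plan is to mimic the proof of Theorem \ref{thm51} (Case 1), since Case 4 differs only in which two columns of $\mathcal{M}(3)$ are chosen to be linearly independent and in which two quartic polynomials play the role of the extra vanishing polynomials. First I would set $\mathcal{I}:=\{p\in\mathcal{P}_6:p|_{\mathcal V}\equiv 0\}$ and note that the ``only if'' direction is immediate: if $\beta$ has a representing measure $\mu$, then $\operatorname{supp}\mu\subseteq\mathcal V$, so the variety condition, weak consistency, and consistency all hold (consistency applied to the two displayed polynomials, which lie in $\mathcal{P}_6$ and vanish on $\mathcal{V}$, gives exactly the two families of Riesz-functional identities). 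For the converse, weak consistency together with Lemma \ref{lem31} guarantees that $W_{\mathfrak{B}_4}$ is invertible, so the linear systems $W_{\mathfrak{B}_4}\mathbf{a}=(x_1^3 y_1,\dots,x_8^3 y_8)^T$ and $W_{\mathfrak{B}_4}\mathbf{a}=(x_1 y_1^3,\dots,x_8 y_8^3)^T$ (the two quartic monomials $x^3 y$ and $x y^3$ \emph{not} in the span of $\mathfrak{B}_4\cup\{x^4,y^4,x^2y^2\}$-closure that need representatives — here the relevant "missing" quartics are $x^3y$ and $xy^3$, while $x^4,y^4$ are already handled by recursiveness from $X^3,Y^3$ being dependent) have unique solutions. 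Actually, the cleanest bookkeeping is: since $\mathfrak{B}_4=\{\textit{1},X,Y,X^2,XY,Y^2,X^2Y,XY^2\}$ spans $\mathcal{C}_{\mathcal{M}(3)}$, there are column relations expressing $X^3$, $Y^3$, and one of $X^2Y^2$-type degree-four objects; one identifies the three degree-$\le 3$ column relations $q_k(X,Y)=\mathbf{0}$ ($k$ ranging over the dependent columns $X^3,Y^3$ and — wait, only two of the four cubics are dependent, namely $X^3$ and $Y^3$ are \emph{not} both necessarily dependent; precisely $\mathfrak{B}_4$ omits $X^3$ and $Y^3$, so those two give the cubic relations), and then one needs exactly one extra quartic, namely $x^2y^2$ minus its $\mathfrak{B}_4$-representative, to generate the degree-four part of $\mathcal{I}$.

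Concretely, I would apply the Division Algorithm (Theorem \ref{divalg}) with the ordered tuple $F=(q_{X^3},\,q_{Y^3},\,s)$, where $q_{X^3},q_{Y^3}$ are the two cubic polynomials coming from the column relations $X^3=\cdots$ and $Y^3=\cdots$ in $\mathcal{M}(3)$, and $s(x,y):=x^2y^2-(a_0+a_1x+\cdots+a_7 x^2y)$ is the unique degree-four polynomial with leading term $x^2y^2$ vanishing on $\mathcal V$ (whose coefficients are $W_{\mathfrak{B}_4}^{-1}(x_1^2y_1^2,\dots,x_8^2y_8^2)^T$ by invertibility of $W_{\mathfrak{B}_4}$). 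Choosing a monomial order in which the leading terms of $q_{X^3},q_{Y^3},s$ are $x^3,y^3,x^2y^2$, every $p\in\mathcal{I}$ is written $p=Aq_{X^3}+Bq_{Y^3}+Cs+r$ with $A,B\in\mathcal{P}_3$, $C\in\mathcal{P}_2$, and $r$ a linear combination of monomials divisible by none of $x^3,y^3,x^2y^2$ — hence $r\in\operatorname{span}\{\textit{1},x,y,x^2,xy,y^2,x^2y,xy^2\}$, i.e. $r=c_0+c_1x+c_2y+c_3x^2+c_4xy+c_5y^2+c_6x^2y+c_7xy^2$. Since $p|_{\mathcal V}\equiv0$ and $q_{X^3},q_{Y^3},s$ all vanish on $\mathcal V$, we get $r|_{\mathcal V}\equiv 0$, which forces $W_{\mathfrak{B}_4}(c_0,\dots,c_7)^T=\mathbf{0}$, and invertibility of $W_{\mathfrak{B}_4}$ gives $r\equiv 0$. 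Therefore $\mathcal{I}=\{fq_{X^3}+gq_{Y^3}+qs:f,g\in\mathcal{P}_3,\,q\in\mathcal{P}_2\}$.

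With this representation in hand, consistency of $\beta$ reduces to verifying $\Lambda_\beta(x^iy^j q_{X^3})=0$ and $\Lambda_\beta(x^iy^j q_{Y^3})=0$ for $0\le i+j\le 3$, together with $\Lambda_\beta(x^ty^u s)=0$ for $0\le t+u\le 2$. The first two families are automatic from \eqref{recureq} and the column relations defining $q_{X^3},q_{Y^3}$ in $\mathcal{M}(3)$ (this is the standard fact that Riesz-functional identities against monomials of total degree $\le 3$ are encoded in $\mathcal{M}(3)$ itself, via $\langle\mathcal{M}(3)\hat p,\hat q\rangle=\Lambda(pq)$). The third family is precisely the displayed hypothesis of the theorem (with $s$ in place of the parenthesized quartic). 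Hence $\beta$ is consistent, and since $\beta$ is extremal ($r=v$) and $\mathcal{M}(3)\ge 0$, Theorem \ref{thm13} yields a (unique, $\operatorname{rank}\mathcal{M}(3)$-atomic) representing measure, completing the converse.

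The main obstacle — and the only genuinely non-routine point — is confirming that the remainder $r$ lands in $\operatorname{span}\{\textit{1},x,y,x^2,xy,y^2,x^2y,xy^2\}$, i.e. that a suitable monomial order exists for which no monomial outside this eight-dimensional span survives as a remainder term when the leading terms are $x^3$, $y^3$, $x^2y^2$. One must check that every degree-$\le 6$ monomial other than these eight is divisible by at least one of $x^3,y^3,x^2y^2$; for instance $x^4=x\cdot x^3$, $x^3y=x^3\cdot y$, $x^4y=x^3\cdot xy$, $x^3y^2=x^3\cdot y^2$, $x^2y^3 = y^3\cdot x^2$, $x^2y^2$ itself, etc. — every monomial of total degree $\ge 3$ with an exponent $\ge 3$ in $x$ or $y$ is killed by $x^3$ or $y^3$, and the remaining ones of degree $\ge 4$ (those with both exponents $\le 2$ and total degree $\ge 4$, namely $x^2y^2$) are killed by $x^2y^2$, while $x^2y$ and $xy^2$ survive. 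A degree-lexicographic order handles this cleanly. Everything else is the routine transcription of the Case 1 argument, with the cubic relations now being $q_{X^3},q_{Y^3}$ and the single extra quartic being $s$ with leading monomial $x^2y^2$, exactly as stated.
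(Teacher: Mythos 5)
Your overall architecture is the one the paper intends (the paper omits the detailed proof of Case 4 and gives only a sketch): two cubic relations from the dependent columns $X^3,Y^3$, one extra quartic $s$ with leading monomial $x^2y^2$ built from $W_{\mathfrak B_4}^{-1}$, a Division Algorithm decomposition forcing the remainder into $\operatorname{span}\mathfrak B_4$, and then Theorem \ref{thm13} once consistency is reduced to the finitely many displayed Riesz identities. The ``only if'' direction and the final reduction of $\Lambda_\b(Aq_{X^3})$, $\Lambda_\b(Bq_{Y^3})$ to (\ref{recureq}) are fine.

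However, there is a genuine gap exactly at the point you flag as ``the only genuinely non-routine point,'' and your proposed fix does not work. For Theorem \ref{divalg} to guarantee that the remainder contains no monomial divisible by $x^3$, $y^3$, or $x^2y^2$, you need a single monomial order under which these three monomials are simultaneously the leading terms of $q_{X^3}$, $q_{Y^3}$, $s$. In Case 4 the non-leading terms of the cubic relations lie in $\operatorname{span}\mathfrak B_4$, which \emph{contains the cubic monomials} $x^2y$ and $xy^2$. If $q_{X^3}$ involves $x^2y$ (or $xy^2$) with nonzero coefficient, then $x^3>x^2y$ forces $x>y$ in the order; if $q_{Y^3}$ involves $x^2y$ (or $xy^2$), then $y^3>x^2y$ forces $y>x$. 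These are incompatible, so in the generic situation for basis $\mathfrak B_4$ (contrast with Section \ref{sec-r-7}, Case 1, or with $\mathfrak B_1$ in Section \ref{sec-r-8}, where a graded lex order with $y>x$ does work because only one of $x^3,y^3$ sits in the basis) \emph{no} monomial order — degree-lexicographic or otherwise — makes $x^3$ and $y^3$ both leading terms. Consequently the Division Algorithm, applied as you describe, returns a remainder that may contain monomials such as $y^3, y^4,\dots$ or $x^3,x^4,\dots$ (depending on the order chosen), and the compression of the Vandermonde matrix to the surviving monomials is no longer the invertible $8\times 8$ matrix $W_{\mathfrak B_4}$; the conclusion $r\equiv 0$ does not follow. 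To close the gap one must establish the representation $\mathcal I=\mathcal P_3\,q_{X^3}+\mathcal P_3\,q_{Y^3}+\mathcal P_2\,s$ by other means — e.g., a degree-filtration/dimension count showing that $\{x^iy^jq_{X^3},\,x^iy^jq_{Y^3}\}_{i+j\le 3}\cup\{x^ty^us\}_{t+u\le 2}$ spans the $20$-dimensional space $\mathcal I$ (already in degree $4$ this requires checking that the five quartic leading forms of $xq_{X^3},yq_{X^3},xq_{Y^3},yq_{Y^3},s$ are independent, which imposes a nontrivial determinant condition on the coefficients of the cubic relations), or a border-basis/Vandermonde argument in the spirit of \cite{Fia08}. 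As written, the step fails, and it is the crux of the proof.
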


(Observe that in Theorem \ref{thm-rv8-4}, we are essentially representing the column $X^2Y^2$ in terms of columns of lower degrees.) \ In \cite{tcmp11}, the authors provided an example of $\mathcal M (3)$ satisfying $Y=X^3$ in $\mathcal C_{\mathcal M (3)}$ and $r=v=8$, which is weakly consistent but not consistent. \ Consequently, $\b^{(6)}$ has no representing measure. \ They actually obtained a rather general class of examples; here, we will content ourselves with focusing on one specific case, to show that the numerical conditions about moments in the our main theorems are essential.

\begin{example} \label{ex16} (cf. \cite[Theorem 5.2]{tcmp11}) \ Consider $\b^{(6)}$ with following moments;
\begin{eqnarray*}
  \begin{array}{llll}
    \b_{00}:=14 ,& \b_{10}:=\frac{7}{2} ,& \b_{01}:=-\frac{67}{8} ,& \\

\medskip \b_{20}:=\frac{79}{4} ,& \b_{11}:= \frac{1055}{16} ,& \b_{02}:= \frac{18195}{64},&\\

\medskip    \b_{30}:=-\frac{67}{8} ,& \b_{21}:= -\frac{1935}{32} ,& \b_{12}:=-\frac{43115}{128} ,& \b_{03}:=-\frac{926695}{512}, \\

\medskip    \b_{40}:= \frac{1055}{16},& \b_{31}:= \frac{18195}{64},& \b_{22}:= \frac{336151}{256},& \\

\medskip    \b_{13}:= \frac{6407195}{1024},& \b_{04}:= \frac{124731423}{4096},& &\\

\medskip    \b_{50}:=-\frac{1935}{32} ,& \b_{41}:=-\frac{43115}{128}  ,& \b_{32}:=-\frac{926695}{512} ,& \\
\medskip    \b_{23}:= -\frac{19736547}{2048} ,& \b_{14}:=-\frac{419176415}{8192} ,& \b_{05}:=-\frac{8894873563}{32768}  ,& \\
\medskip    \b_{60}:=\frac{18195}{64} ,& \b_{51}:=\frac{336151}{256}  ,& \b_{42}:=\frac{6407195}{1024} ,& \b_{33}:=\frac{124731423}{4096},  \\
\medskip     \b_{24}:=\frac{2469281827}{16384} ,& \b_{15}:=\frac{49568350247}{65536} ,&\b_{06}:= \frac{1006568996907}{262144}. &\\
  \end{array}
\end{eqnarray*}
After building $\mathcal M (3)(\beta)$, we see that there are two column relations $$f(X,Y):=X^3-Y=\mathbf{0}$$ and $$g(X,Y):=Y^3 -3X + \frac{3}{4}Y +13X^2 -\frac{65}{4}XY + \frac{13}{4}Y^2 -12X^3 +22 X^2 Y -\frac{35}{4}XY^2=\mathbf{0}$$ in $\mathcal C_{\mathcal M (3)}$. \ This moment matrix satisfies (\ref{rv8}) with the basis $\mathfrak B_4$ as in Case 4. \ A calculation shows that the algebraic variety is 
\begin{eqnarray*}
\mathcal V & = & \{(0,0),(-2,-8),(2,8),(1,1), \left((-1-\sqrt{13})/2,-5-2\sqrt{13}\right), \\
& & \left((-1+\sqrt{13})/2,-5+2\sqrt{13}\right), (-1,-1),\left(1/2,1/8 \right)\}
\end{eqnarray*}
(see \cite[Section 6]{tcmp11}). \ In order to apply Theorem \ref{thm-rv8-4}, we need to find the new polynomial, denoted as $h(x,y)$, vanishing on $\mathcal V$ by using the compression of the generalized Vandermonde matrix. \ Theorem 6.2 in \cite{tcmp11} states that the above mentioned moment matrix $\mathcal{M}(3)$ is positive semidefinite, recursively generated, and extremal, but does not admit a representing measure (because it is not consistent). \ Using our Theorem \ref{thm-rv8-4}, we can proceed as follows. \ First, 
\begin{equation*}
 h(x,y)=x^4+ 6 x-\frac{11}{2} y-14x^2+\frac{43 }{2}x y-\frac{17}{2} y^2 -x^2 y +\frac{1}{2}x y^2.
\end{equation*}
Next, we evaluate the Riesz functional acting on $h$:
\begin{eqnarray*}
    \Lambda_\b (h)&=& \b_{40}+ 6 \b_{10}-\frac{11}{2}\b_{01}-14\b_{20}+\frac{43 }{2}\b_{11}-\frac{17}{2}\b_{02} -\b_{21} +\frac{1}{2}\b_{12}\\
    &=&-\frac{320081}{256},
\end{eqnarray*}
which is different from zero. \ That is, even though $h$ vanishes on $\mathcal V$, its Riesz functional is not zero. \ We have therefore verified that $\b$ admits no representing measure. $\hfill\square$

\end{example}

\bigskip
\textit{Acknowledgment}. \ Example \ref{ex16}, and portions of the proofs of
some results in this paper were obtained using calculations with the
software tool \textit{Mathematica \cite{Wol}}.

\bigskip

Department of Mathematics, The University of Iowa, Iowa City, Iowa 52242; \ raul-curto@uiowa.edu

\medskip
Department of Mathematics, Sungkyunkwan University, Suwon, Korea; \ seyoo73@gmail.com

\end{document}